\newtheorem{theorem}{Theorem}[section]
\newtheorem{prop}[theorem]{Proposition}
\numberwithin{equation}{section}
\newcommand{\R}{\mathbb{R}}
\newcommand{\Rd}{\mathbb{R}^d}
\newcommand{\E}{\mathbb{E}}
\newcommand{\cadlag}{c\`adl\`ag}
\newcommand{\g}{\mathfrak{g}}
\newcommand{\p}{\mathfrak{p}}
\newcommand{\fk}{\mathfrak{k}}
\newcommand{\tr}{\mbox{tr}}
\newcommand{\bean}{\begin{eqnarray*}}
\newcommand{\eean}{\end{eqnarray*}}
\newcommand{\G}{\widehat{G}}
\newcommand{\Ad}{\rm{Ad}}
\begin{document}

\date{}

\title{Markov Processes with Jumps on Manifolds and Lie Groups}

\author{David Applebaum\footnote{D.Applebaum@sheffield.ac.uk}\\
 School of Mathematics and Statistics,\\ University of Sheffield, \\
Sheffield S3 7RH\\
United Kingdom.\\
 \and
    Ming Liao\footnote{liaomin@auburn.edu}\\
Department of Mathematics,\\ Auburn University, Auburn, AL, USA}

\maketitle

\begin{abstract}
We review some developments concerning Markov and Feller processes with jumps in geometric settings. These include stochastic differential equations in Markus canonical form, the Courr\`{e}ge theorem on Lie groups, and invariant Markov processes on manifolds under both transitive and more general Lie group actions.
\end{abstract}

\begin{center}
{\it We dedicate this article to the memory of Hiroshi Kunita (1937--2019).}
\end{center}

\section{Introduction}

Stochastic differential geometry is a deep and beautiful subject. It is essentially the study of stochastic processes that take their values in manifolds, and so it naturally sits at the intersection of probability theory with differential geometry, but it also impacts on, and makes use of techniques from real, stochastic and functional analysis, dynamical systems and ergodic theory. If the manifold has the additional structure of being a Lie group, then more tools are available and the results are of considerable interest in their own right.  By far the majority of work on the subject has arisen from studying Markov processes that arise as the solutions of stochastic differential equations (SDEs) on manifolds or Lie groups. From its beginnings with the pioneering work of It\^{o} in 1950 \cite{It} until the 1990's, the emphasis was on processes with continuous sample paths that are obtained by solving SDEs driven by Brownian motion. For accounts of this work, see \cite{El0, El, Hsu} and references therein.  More recently, there has been increasing interest in studying processes with jumps which are solutions of SDEs driven by L\'{e}vy processes. The material reviewed in this paper is wholly concerned with the jump case, including L\'{e}vy processes in Lie groups and manifolds, and generalisations.

The organisation of the paper is as follows. we begin with a short section 2 that reviews the key definitions of Markov and Feller processes in a suitably general setting. In section 3, we describe SDEs driven by L\'{e}vy processes on manifolds in Markus canonical form. If unique solutions exist, then they give rise to a Markov process. As an example we show how to obtain a L\'{e}vy process on a compact manifold by projection from the solution of an SDE on the frame bundle. When there are no jumps, this is precisely the celebrated Eels--Elworthy construction of Brownian motion on a manifold. The next section is more analytic. We outline the proof of a global version of the Courr\`{e}ge theorem in a Lie group, which gives a canonical form for a linear operator that satisfies the positive maximum principle (PMP). The probabilistic importance of this result is that the generators of all sufficiently rich Feller processes satisfy the PMP, and so their generators must be of this form. Indeed we see that the generators are characterised by a real--valued function, a vector--valued function, a matrix--valued function, and a kernel, that may be probabilistically interpreted as describing killing, drift, diffusion and jump intensity (respectively). We also describe how, when the group is compact, the generator may be represented by a pseudo--differential operator in the Ruzhansky--Turunen sense.

Sections 5 and 7 deal with Markov processes that are suitably invariant (i.e. their transition probabilities are invariant) under the action of a Lie group. In section 5, we examine the case where the group acts transitively. In this case the manifold is a homogeneous space and the Markov process is, in fact, a L\'{e}vy process. To consider the non--transitive case, we need the notion of inhomogeneous L\'{e}vy process in a homogeneous space, i.e. a process that has independent, but not necessarily stationary increments. These are described more fully in the short section 6. In section 7, we consider the non--transitive case where we may effectively assume that the manifold $M$ is the product of another manifold $M_{1}$ and a homogeneous space $M_{2}$. Then our process is the product of a radial part, that lives in $M_{1}$, and an angular part that lives in $M_{2}$. In fact the radial part is a Markov process, and the angular part is an inhomogeneous L\'{e}vy process. In the case of sample path continuity, some more detailed results are given.

We emphasise that this survey is by no means comprehensive. Our goal is the limited one of giving an introduction to the subject, and placing the spotlight on some key themes where there is active work going on. For more systematic study of L\'{e}vy processes on Lie groups and invariant Markov processes on manifolds, see \cite{Liao, LiaoN}. An important topic that is not dealt with here is the application of Malliavin calculus on Wiener--Poisson space to study regularity of transition densities for jump--diffusions. The recently published monograph \cite{Kun} presents a systematic account of key results in this area.

\vspace{5pt}

{\bf Notation.} If $E$ is a locally compact Hausdorff space, the Borel $\sigma$--algebra of $E$ is denoted as ${\mathcal B}(E)$. We denote by ${\mathcal F}(E)$, the space of all real--valued functions on $G$ and $C_{0}(E)$ is the Banach space (with respect to the supremum norm $||\cdot||_{\infty}$) of all real--valued, continuous functions on $G$ that vanish at infinity. If $M$ is a finite--dimensional real $C^{\infty}$--manifold, then $C_{c}^{\infty}(M)$ is the dense linear manifold in $C_{0}(M)$ comprising all smooth functions with compact support.

\noindent The trace of a real or complex $d \times d$ matrix $A$ is written tr$(A)$. We denote as $B_{1}$, the open ball of radius $1$ in $\Rd$ that is centred on the origin.

\section{Markov and Feller Processes}

Let $(\Omega, {\mathcal F}, P)$ be a probability space and $(E, {\mathcal E})$ be a measurable space. An {\it $E$--valued stochastic process} is a family $X: = (X_{t}, t \geq 0)$ of random variables defined on $\Omega$ and taking values in $E$ (so for all $t \geq 0, X_{t}$ is ${\mathcal F}-{\mathcal E}$ measurable). Now suppose that ${\mathcal F}$ is equipped with a filtration $({\mathcal F}_{t}, t \geq 0)$. An adapted $E$--valued stochastic process $X$ is a {\it Markov process} if for all bounded measurable functions $f:E \rightarrow \R$, and all $0 \leq s \leq t < \infty$,
$$ \E(f(X_{t})|{\mathcal F}_{s}) = \E(f(X_{t})|X_{s})~~~\mbox{(a.s.)}$$

We then obtain a family of linear operators $(T_{s,t}, 0 \leq s \leq t < \infty)$ (in fact, these are also contractions and positivity preserving) on the Banach space $B_{\mathcal E}(E)$ of all bounded measurable real--valued functions on $E$ (equipped with the supremum norm) by the prescription
$$ T_{s,t}f(x) = \E(f(X_{t})|X_{s} = x).$$
From now on all Markov processes that are considered will be {\it homogeneous}, in that $T_{s,t} = T_{0,t-s} =:T_{t-s}$, unless otherwise stated. Then the family $(T_{t}, t \geq 0)$ is an algebraic operator (contraction) semigroup on $B_{\mathcal E}(E)$, in that for all $s, t \geq 0$:
$$ T_{s+t} = T_{s}T_{t},~~\mbox{and}~~T_{0} = I.$$
From now on, we will assume that $E$ is a locally compact, second countable Hausdorff space and ${\mathcal E}$ is its Borel $\sigma$--algebra
For each $t \geq 0, B \in {\mathcal E}, x \in E$ we define the transition probability by
$$ p_{t}(x,B) = P(X_{t} \in B| X_{0} = x) = \E({\bf 1}_{X_{t}^{-1}(B)}|X_{0} = x).$$
We then have the representation
\begin{equation} \label{semint}
T_{t}f(x) = \int_{E}f(y)p_{t}(x,dy),
\end{equation}
for all $t \geq 0, f \in B_{\mathcal E}(E), x \in E$, where we have taken a regular version\footnote{Our assumptions on $(E, {\mathcal E})$ ensure that such a version exists.} of the transition probability.
If the mappings $x \rightarrow p_{t}(x,B)$ are measurable, we have the Chapman--Kolmogorov equations
$$ p_{s+t}(x,B) = \int_{E}p_{s}(y, B)p_{t}(x, dy).$$
We would like to write down a differential equation for the transition probabilities which would enable us to extract information about these. This should be of the form of {\it Kolmogorov's forward equation}
\begin{equation} \label{KFE}
\frac{\partial p_{t}(x, B)}{\partial t} = A^{\dagger}p_{t}(x, B ),
\end{equation}
where $A^{\dagger}$ is the formal adjoint of a linear operator $A$ acting on a suitable space of functions. We say that our process $X$ is a {\it Feller process} if $(T_{t}, t \geq 0)$ is a {\it Feller semigroup}, i.e.

\begin{enumerate}
\item $T_{t}(C_{0}(E)) \subseteq C_{0}(E)$ for all $t \geq 0$,
\item $\lim_{t \rightarrow 0}||T_{t}f - f|| = 0$ for all $f \in C_{0}(E).$
\end{enumerate}

It then follows that $(T_{t}, t \geq 0)$ is a strongly continuous contraction semigroup on $C_{0}(E)$. Hence it has an {\it infinitesimal generator} $A$ defined on a dense subspace $D_{A}$ of $C_{0}(E)$ so that for all $f \in D_{A}$,
$$ \lim_{t \rightarrow 0}\left|\left| \frac{T_{t}f - f}{t} - Af\right|\right| = 0.$$
It is precisely this operator that enables us to make sense of (\ref{KFE}).

\section{Stochastic Differential Equations on Manifolds}

There are two ways of making sense of stochastic differential equations (SDEs) on manifolds. The first dates back to It\^{o} \cite{It}, and is nicely described in Chapter 5 of Ikeda--Watanabe \cite{IW}. It involves solving the equation in local co--ordinates in each chart and then showing that the solutions transform geometrically on overlaps. Another method, which can be found in Elworthy \cite{El}, involves using the Whitney or Nash embedding theorem to embed the manifold into a Euclidean space of larger dimension. In that larger space, we must then show that if the initial data lie on the embedded manifold, then so does the solution for all later times. If the driving noise is a continuous semimartingale we must use the Stratonovitch differential to set up the SDE, and for discontinuous semimartingales, the more general Marcus canonical form. However we cannot expect to get Markov processes as solutions with such great generality. Nonetheless, by the argument of section 6.4.2 in \cite{Appbk} pp.387--8, we find that if global solutions exist, then they yield Markov processes when the driving noise is a L\'{e}vy process. To be more specific, we follow \cite{AT}. Let $M$ be a manifold of dimension $d$, and consider an $\Rd$--valued L\'{e}vy process $L = (L(t), t \geq 0)$ having L\'{e}vy measure $\nu$ and L\'{e}vy--It\^{o} decomposition (see Chapter 2 of \cite{Appbk} for background and explanation of the notation)
$$ L_{i}(t) = b_{i}t + \sum_{j=1}^{m}\sigma_{ij}B_{j}(t) + \int_{B_{1}\setminus \{0\}}y_{i}\widetilde{N}(t, dy) + \int_{B_{1}^{c}}y_{i}N(t, dy),$$
for all $i = 1, \ldots, d, t \geq 0$.

Let $Y_{1}, \ldots, Y_{d}$ be $C^{\infty}$ vector fields which have the properties that

\begin{enumerate}
\item[(A1)] All finite linear combinations of $Y_{1}, \ldots, Y_{d}$ are complete,
\item[(A2)] Each $Y_{j}$ has bounded derivatives to all orders in every co--ordinate system (obtained by smoothly embedding $M$ into a Euclidean space).
\end{enumerate}
Now consider the following SDE
\begin{equation} \label{Mark}
dX_{t} = \sum_{i=1}^{d}Y_{i}(X_{t-}) \diamond dL(t)
\end{equation}
with initial condition $X(0) = p$ (a.s.) (where $p \in M$). The $\diamond$ stands for the Markus canonical integral, so in a local co--ordinate system containing $p$ we have the symbolic form

\bean X_{t} & = & p + \sum_{i=1}^{d}b_{i}\int_{0}^{t}Y_{i}(X_{s-})ds + \sum_{i=1}^{d}\sum_{j=1}^{m}\sigma_{ij}\int_{0}^{t}Y_{i}(X_{s-}) \circ dB_{j}(s)\\
& + & \int_{0}^{t}\int_{B_{1}\setminus \{0\}}\left[\exp\left(\sum_{j=1}^{d}y_{j}Y_{j}\right)(X_{s-}) - X_{s-}\right]\widetilde{N}(t, dy)\\
& + & \int_{0}^{t}\int_{B_{1}^{c}}\left[\exp\left(\sum_{j=1}^{d}y_{j}Y_{j}\right)(X_{s-}) - X_{s-}\right]N(t, dy)\\
& + & \int_{0}^{t}\int_{B_{1}\setminus \{0\}}\left[\exp\left(\sum_{j=1}^{d}y_{j}Y_{j}\right)(X_{s-}) - X_{s-} - \sum_{j=1}^{d}y_{j}Y_{j}(X(s-))\right]\nu(dy)ds. \eean

\noindent where $\exp$ is the exponential mapping from complete vector fields to diffeomorphisms, and $\circ$ is the Stratonovitch differential. Under the stated conditions, we do indeed obtain a global solution (that is in fact a stochastic flow of diffeomorphisms) and which, as discussed above will be a Markov process. In fact the conditions are always satisfied if the manifold is compact. We might also ask about the Feller property. In general this is not so easy to answer; for SDEs driven by L\'{e}vy processes in Euclidean space, it is sufficient for coefficients to be bounded as well as suitably Lipschitz continuous (see section 6.7 of \cite{Appbk}). We do have the following result for manifolds.

\begin{theorem} If $M$ is compact then the solution to (\ref{Mark}) is a Feller process. If $A$ is the infinitesimal generator of the transition semigroup, then $C^{2}(M) \subseteq$ Dom$(A)$ and for all $f \in C^{2}(M), x \in M$,
\begin{eqnarray} \label{Mgen}
Af(x) & = & \sum_{i=1}^{d}b_{i}Y_{i}f(x) + \frac{1}{2}\sum_{i=1}^{d}\sum_{j=1}^{m}a_{ij}Y_{i}Y_{j}f(x) \nonumber \\
& + & \int_{\Rd \setminus \{0\}}\left[f\left(\exp\left(\sum_{i=1}^{d}y_{i}Y_{i}\right)x\right) - f(x) - \sum_{i=1}^{d}y_{i}Y_{i}f(x){\bf 1}_{B_{1}(0)}(y)\right]\nu(dy), \nonumber \\
& &
\end{eqnarray}
where the matrix $a: = \sigma \sigma^{T}$.
\end{theorem}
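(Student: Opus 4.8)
The plan is to read off the Feller property from the flow and to obtain the generator directly from the change of variables formula for the Marcus canonical equation. Throughout I use that, since $M$ is compact, the solution exists globally as a stochastic flow of diffeomorphisms (as noted above), that $C_{0}(M) = C(M)$, and that the semigroup is conservative, $T_{t}1 = 1$, because the paths never leave $M$. Write $T_{t}f(x) = \E(f(X_{t}))$ for the solution started at $x$, and abbreviate $y\cdot Y = \sum_{i}y_{i}Y_{i}$.

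First I would establish the mapping property $T_{t}(C(M)) \subseteq C(M)$. If $x_{n} \to x$ in $M$, then by continuous dependence of the flow on its initial point $X_{t}^{x_{n}} \to X_{t}^{x}$ almost surely for each fixed $t$; since $f \in C(M)$ is bounded, bounded convergence gives $T_{t}f(x_{n}) \to T_{t}f(x)$, so $T_{t}f \in C(M)$. Positivity together with $T_{t}1 = 1$ makes each $T_{t}$ a contraction on $C(M)$.

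The heart of the argument is the generator computation. For $f \in C^{2}(M)$ I would apply the change of variables formula for (\ref{Mark}), which takes the compensated form
\begin{eqnarray*}
f(X_{t}) - f(x) & = & \int_{0}^{t}\sum_{i}b_{i}(Y_{i}f)(X_{s-})\,ds + \int_{0}^{t}\sum_{i,j}\sigma_{ij}(Y_{i}f)(X_{s-})\,dB_{j}(s) + \tfrac{1}{2}\int_{0}^{t}\sum_{i,j}a_{ij}(Y_{i}Y_{j}f)(X_{s-})\,ds \\
& + & \int_{0}^{t}\int_{B_{1}\setminus\{0\}}\left[f(\exp(y\cdot Y)X_{s-}) - f(X_{s-})\right]\widetilde{N}(ds,dy) \\
& + & \int_{0}^{t}\int_{B_{1}^{c}}\left[f(\exp(y\cdot Y)X_{s-}) - f(X_{s-})\right]N(ds,dy) \\
& + & \int_{0}^{t}\int_{B_{1}\setminus\{0\}}\left[f(\exp(y\cdot Y)X_{s-}) - f(X_{s-}) - \sum_{i}y_{i}(Y_{i}f)(X_{s-})\right]\nu(dy)\,ds.
\end{eqnarray*}
Here the $\tfrac{1}{2}a_{ij}$ term is the It\^o correction produced on converting the Stratonovitch differential of the continuous martingale part, and the last line is the compensator of the small jumps. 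Taking expectations kills the $dB_{j}$ and $\widetilde{N}$ integrals (they are martingales of zero mean, their integrands being bounded, respectively square--integrable against $\nu\,ds$, by (A2) and compactness), while the $B_{1}^{c}$ integral may be compensated since $\nu(B_{1}^{c}) < \infty$. This gives $\E(f(X_{t})) - f(x) = \E\int_{0}^{t}(Af)(X_{s-})\,ds$ with $Af$ exactly the operator (\ref{Mgen}). Dividing by $t$ and letting $t \to 0$, right continuity of the paths and boundedness of $Af$ yield $\|t^{-1}(T_{t}f - f) - Af\|_{\infty} \to 0$, so $C^{2}(M) \subseteq \Dom(A)$ with $A$ given by (\ref{Mgen}). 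The bound $\|T_{t}f - f\|_{\infty} \le t\|Af\|_{\infty}$ for $f \in C^{2}(M)$, together with the density of $C^{2}(M)$ in $C(M)$ and the contraction property, then furnishes strong continuity $\lim_{t\to0}\|T_{t}f - f\|_{\infty} = 0$ for every $f$, so $X$ is Feller.

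The main obstacle is justifying the change of variables formula in its correctly compensated form, together with the attendant integrability. The delicate point is the small--jump term: one must show that $f(\exp(y\cdot Y)z) - f(z) - \sum_{i}y_{i}(Y_{i}f)(z) = O(|y|^{2})$ uniformly in $z \in M$, so that it is $\nu$--integrable and its compensator is well defined. This is a second--order Taylor expansion of $\epsilon \mapsto f(\exp(\epsilon\,y\cdot Y)z)$, whose first derivative at $\epsilon = 0$ is $\sum_{i}y_{i}(Y_{i}f)(z)$ and whose remainder is controlled by the bound (A2) on the derivatives of the $Y_{i}$ together with the compactness of $M$. The completeness hypothesis (A1) is precisely what makes $\exp(y\cdot Y)$ a well--defined global diffeomorphism, so that the jump map is meaningful. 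Once these uniform estimates and the true--martingale property of the compensated integrals are secured, the remaining manipulations are routine.
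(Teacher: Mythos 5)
Your proposal is correct and takes essentially the same route as the paper, which only sketches the argument (citing Theorem 6.7.4 of \cite{Appbk}): continuity of the solution flow plus dominated convergence gives $T_{t}:C(M)\rightarrow C(M)$, and It\^{o}'s formula for the Marcus equation, after taking expectations, yields $T_{t}f(x)-f(x)=\int_{0}^{t}T_{s}Af(x)\,ds$, from which the generator identity and strong continuity follow. Your write-up simply fills in the details the paper leaves implicit, namely the explicit Stratonovitch-to-It\^{o} correction, the martingale cancellations, and the uniform $O(|y|^{2})$ Taylor bound on the small-jump term.
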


\begin{proof} We just sketch this as it is along similar lines to that of Theorem 6.7.4 in \cite{Appbk} pp.402--3. Let $\Phi_{t}$ be the solution flow that takes each $x \in M$ to the solution of (\ref{Mark}) at time $t$ with initial condition $X_{0} = x$ (a.s.). Then we have $T_{t}f(x) = \E(f(\Phi_{t}(x)))$ for each $t \geq 0, f \in C(M)$. Both (A1) and (A2) hold and so the mapping $\Phi_{t}$ is continuous. Hence $T_{t}:C(M) \rightarrow C(M)$ by dominated convergence. Using It\^{o}'s formula we deduce that for all $f \in C^{2}(M)$,
$$ T_{t}f(x) - f(x) = \int_{0}^{t}T_{s}Af(x)ds,$$
and the result follows easily from here.
\end{proof}

Another interesting example of a class of Feller processes on compact Riemannian manifolds are the isotropic L\'{e}vy processes which are obtained by first solving the equation

\begin{equation} \label{isotrop}
dR_{t} = \sum_{j=1}^{d}H_{j}(R_{t-}) \diamond dL_{j}(t),
\end{equation}

\noindent on $O(M)$, the bundle of orthonormal frames over $M$. Here $H_{1}, \ldots, H_{d}$ are horizontal vector fields (with respect to the Riemannian connection on $M$), and $L$ is an isotropic L\'{e}vy process on $\Rd$ (i.e. its laws are $O(d)$--invariant). The required isotropic process is given by $X_{t} = \pi(R_{t})$ where $\pi:O(M) \rightarrow M$ is the canonical surjection. The generator of the Feller process $X$ is the sum of a non--negative multiple of the Laplace--Beltrami operator and an integral superposition of jumps along geodesics weighted by the L\'{e}vy measure of $L$. For details see \cite{AE}. If $L$ is a standard Brownian motion so that (\ref{isotrop}) reduces to a Stratonovitch equation, then $X$ is Brownian motion on $M$ whose generator is the Laplace--Beltrami operator.

Now return to the SDE (\ref{Mark}). In \cite{AK}, it is shown the this equation has a unique solution (which will be a Markov process) if (A1) and (A2) are replaced by the single condition that the vector fields $Y_{1}, \ldots, Y_{d}$ generate a finite--dimensional Lie algebra. In general this is quite a strong assumption; as we are essentially saying that solution flow only explores a ``small'' finite--dimensional part of the ``huge'' infinite--dimensional diffeomorphism group of $M$. But one case where it occurs naturally is if $M$ is a Lie group, which we now denote as $G$, and if $Y_{1}, \ldots, Y_{d}$ are assumed to be left--invariant vector fields which form a basis for the Lie algebra $\g$ of $G$. Before proceeding further, we will define a L\'{e}vy process on a Lie group. This is precisely as in the well--known Euclidean space, i.e. a \cadlag~process $X = (X_{t}, t \geq 0)$ , that is stochastically continuous, satisfies $X_{0} = e$ (a.s.), where $e$ is the neutral element of $G$ and has stationary and independent increments, where the increment between times $s$ and $t$ with $s \leq t$, is understood to be $X(s)^{-1}X(t)$. Since the pioneering work of Hunt in 1956 \cite{Hunt}, it has been known that L\'{e}vy processes are Feller processes (take the filtration to be the natural one coming from the process). The associated Feller semigroup is defined for $f \in C_{0}(G), \sigma \in G, t \geq 0$ by

$$ T_{t}f(\sigma) = \E(f(\sigma X_{t})) = \int_{G}f(\sigma \tau)\rho_{t}(d \tau),$$ where $\rho_{t}$ is the law of $X_{t}$. In fact $(\rho_{t}, t \geq 0)$ is a weakly continuous, convolution semigroup of probability measures on $(G, {\mathcal B}(G))$. Hunt also showed that the generator $A$ has the following canonical representation on $C_{c}^{\infty}(G) \subseteq \mbox{Dom}(A)$:
For each
$\sigma \in G, f \in C_{c}^{\infty}(G)$,
\begin{eqnarray} \label{hu}
Af(\sigma) & = & \sum_{i=1}^{d}b_{i}Y_{i}f(\sigma) +
\frac{1}{2}\sum_{i,j=1}^{d}a_{ij}Y_{i}Y_{j}f(\sigma)\nonumber \\
 & + & \int_{G \setminus \{e\}}\left(f(\sigma \tau) - f(\sigma) -
   \sum_{i=1}^{d}x_{i}(\tau)Y_{i}f(\sigma)\right)\nu(d\tau), \nonumber \\
   & &
\end{eqnarray}
where $b = (b_{1}, \ldots, b_{d}) \in {\R}^{d}, a = (a_{ij})$ is a
non-negative definite, symmetric $d \times d$ real-valued matrix
and $\nu$ is a L\'{e}vy measure on $G$, i.e. a Borel measure on $G \setminus \{e\}$ which is such that for every canonical co--ordinate neighbourhood $U$ of $e$ we have
	\begin{equation*}
	\int_{U \setminus \{e\}} \left(\sum_{i =1}^d x_i^2(g)\right)\nu(dg) < \infty, \; \text{and } \nu(U^c) < \infty.
 \end{equation*} Here $x_{i} \in C_{c}^{\infty}(G)$ for $i = 1, \ldots, d$ are such that $(x_{1}, \ldots, x_{d})$ are canonical co--ordinates for $G$ in the neighbourhood $U$ of $e$. The triple $(b, a, \nu)$ are called the characteristics of $X$, and they uniquely determine $(\rho_{t}, t \geq 0)$.

In the case where the exponential map from $\g$ to $G$ is onto (e.g. if $G$ is compact and connected, or simply connected and nilpotent), then the L\'{e}vy process $X$ arises as the unique solution to the SDE (\ref{Mark}), provided the characteristics of the driving L\'{e}vy process $L$ are chosen so as to match those in (\ref{hu}). More generally, $X$ is written as a more general type of stochastic integral equation (for details see \cite{AK}, \cite{Liao}).

\vspace{5pt}

{\bf Remark}. A more general form of SDE on manifold than (\ref{Mark}) was developed by S. Cohen \cite{Coh}. In this case, the driving noise itself takes values in a manifold, which is in general different to that in which the solution flow takes values. After some years of neglect, this theory has recently found some new applications in work of \cite{AVMU}.

\section{The Positive Maximum Principle and Courr\`{e}ge Theory}

Let $E$ be a locally compact Hausdorff space and $A$ be a linear mapping with domain Dom$(A) \subseteq C_{0}(E)$ and range Ran$(A) \subseteq {\mathcal F}(E)$. Let $D_{A}$ be a linear subspace of Dom$(A)$. We say that $A$ satisfies the positive maximum principle (PMP) with respect to $D_{A}$ if $f \in D_{A}$ and $f(y) = \sup_{x \in E}f(x) \geq 0 \Rightarrow Af(y) \leq 0$. It is usual in the literature (see e.g. \cite{EK}) to assume that $A$ maps continuously i.e. that Ran$(A) \subseteq C_{0}(E)$. Then one can prove powerful results - e.g. that $A$ is dissipative, and hence if Dom$(A)$ is dense, then $A$ is closeable (see \cite{EK}, Chapter 4, Lemma 2.1 (p.165) and Chapter 1, Lemma 2.11 (p.10)). We say that $A$ has the full PMP if $D_{A} = $Dom$(A)$.

 The connection with our work is through the following well--known result.

\begin{prop} If $A$ is the generator of a Feller semigroup in $C_{0}(E)$ then it has the full PMP.
\end{prop}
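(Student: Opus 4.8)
The plan is to exploit two structural features that any Feller semigroup $(T_t, t \geq 0)$ inherits from its Markovian origin: it is positivity preserving and contractive, and hence sub--Markovian. Using the representation (\ref{semint}), each $T_t$ acts on $f \in C_0(E)$ by $T_t f(y) = \int_E f(z)\, p_t(y, dz)$, where the transition kernels $p_t(y, \cdot)$ are sub--probability measures, so that $p_t(y, E) \leq 1$ for every $y \in E$ and $t \geq 0$. The generator is recovered as the strong limit $Af = \lim_{t \to 0} t^{-1}(T_t f - f)$, and in particular $Af(y) = \lim_{t \to 0^+} t^{-1}(T_t f(y) - f(y))$ holds pointwise at any fixed $y$.

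I would then take an arbitrary $f \in \mbox{Dom}(A)$ and suppose $f$ attains its supremum at $y$, with $M := f(y) = \sup_{x \in E} f(x) \geq 0$. The heart of the argument is the elementary bound
\begin{equation*}
T_t f(y) = \int_E f(z)\, p_t(y, dz) \leq M\, p_t(y, E) \leq M = f(y),
\end{equation*}
where the first inequality uses $f(z) \leq M$ together with positivity of the kernel, and the second uses $p_t(y, E) \leq 1$ together with $M \geq 0$. Consequently $T_t f(y) - f(y) \leq 0$ for every $t > 0$, so the difference quotient $t^{-1}(T_t f(y) - f(y))$ is nonpositive, and passing to the limit $t \to 0^+$ yields $Af(y) \leq 0$. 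Since this reasoning applies to an arbitrary element of $\mbox{Dom}(A)$, the PMP holds with the choice $D_A = \mbox{Dom}(A)$, which is precisely the full PMP.

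The main point requiring care is the sub--Markovian property, namely that $T_t$ is positivity preserving with $p_t(y, E) \leq 1$; this is exactly where the hypothesis that $(T_t)$ comes from a Feller semigroup is used, rather than its merely being a strongly continuous contraction semigroup. Verifying $p_t(y, E) \leq 1$ rigorously requires a short approximation (taking compactly supported $f_n \uparrow \mathbf{1}$, which exists since $E$ is $\sigma$--compact, and applying monotone convergence to the kernel representation), but this is routine. A second subtlety worth flagging is the indispensable role of the sign condition $M \geq 0$: were $M$ negative, the inequality $M\, p_t(y, E) \leq M$ would reverse and the conclusion could fail, which is the structural reason the positive maximum principle imposes $\sup f \geq 0$.
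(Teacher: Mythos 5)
Your proof is correct and follows essentially the same route as the paper: evaluate the difference quotient $t^{-1}(T_{t}f(y) - f(y))$ at the maximum point $y$ via the kernel representation (\ref{semint}) and let $t \to 0^{+}$. If anything you are more careful than the paper's one--line argument, which writes $T_{t}f(y) - f(y) = \int_{E}(f(x) - f(y))\,p_{t}(y,dx)$ and thereby implicitly uses that $p_{t}(y,\cdot)$ is a full probability measure (so the hypothesis $f(y) \geq 0$ is never invoked there), whereas your sub--Markovian estimate $T_{t}f(y) \leq M\,p_{t}(y,E) \leq M$ makes the role of $M \geq 0$ explicit and remains valid for non--conservative Feller semigroups, where $p_{t}(y,E) < 1$ is possible.
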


\begin{proof} Using (\ref{semint}) we have for all $f \in$ Dom$(A)$ with $f(y) = \sup_{x \in E}f(x)$,
\bean Af(y) & = & \lim_{t \rightarrow 0}\frac{T_{t}f(y) - f(y)}{t}\\
& = & \lim_{t \rightarrow 0}\frac{1}{t}\int_{E}(f(x) - f(y))p_{t}(y, dx) \leq 0. \eean
\end{proof}

There are stronger results than this. The celebrated {\it Hille--Yosida--Ray theorem} states that a linear operator $A$ is the generator of a positivity preserving contraction semigroup $(T_{t}, t \geq 0)$ if and only if $A$ is densely defined, closed, satisfies the full PMP and $\lambda I - A$ is onto $C_{0}(E)$ for all $\lambda > 0$. If such a semigroup is also {\it conservative}, i.e. it has an extension to the space of bounded measurable functions on $E$ such that $T_{t}1 = 1$ for all $t \geq 0$, then we may effectively use Kolomogorov's construction to obtain a Feller process for which $(T_{t}, t \geq 0)$ is the transition semigroup. For details see \cite{BSW} pp.13--238 and \cite{EK} pp.165--73.

\subsection{The Courr\`{e}ge Theorem on Euclidean Space and Manifolds}

In this subsection we take $D_{A} = C_{c}^{\infty}(\Rd)$ and consider linear operators $A$ that satisfy the PMP. The following key classification result was first published by Courr\`{e}ge in 1965. In the following statement, we give the form (\ref{Courrth1}) as can be found in Hoh \cite{Hoh}.

\begin{theorem} \label{Courrth} [Courr\`{e}ge theorem]
Let $A$ be a linear operator from $C_{c}^{\infty}(\Rd)$ to $\mathbb{F}(\Rd)$. Then $A$ satisfies the PMP if and only if there exists a unique quadruple $(a(\cdot), b(\cdot), c(\cdot), \nu(\cdot))$ wherein for all $x \in \Rd$, $a(x)$ is a $d \times d$ non--negative definite symmetric matrix, $b(x)$ is a vector in $\Rd, c(x)$ is a non--negative constant and $\nu(x, \cdot)$ is a L\'{e}vy measure on $\Rd$, so that for all $f \in C_{c}^{\infty}(\Rd)$
\begin{eqnarray} \label{Courrth1}
Af(x) & = & -c(x)f(x) + \sum_{i=1}^{d}b_{i}(x)\partial_{i}f(x) + \sum_{i,j =1}^{d}a_{ij}(x)\partial_{i}\partial_{j}f(x) \nonumber \\
& + & \int_{\Rd \setminus \{0\}}\left(f(x + y) - f(x) - \sum_{i=1}^{d}y_{i}\partial_{i}f(x){\bf 1}_{B_{1}}(y)\right)\nu(x,dy) \nonumber \\
& &
\end{eqnarray}
\end{theorem}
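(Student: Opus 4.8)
The plan is to prove the two implications separately, with the bulk of the work in the converse. For the ``if'' direction I would fix $f \in C_{c}^{\infty}(\Rd)$ attaining $f(y) = \sup_{x}f(x) \geq 0$ and check that each of the four terms of (\ref{Courrth1}) is non-positive at $y$. At a global maximum $\nabla f(y) = 0$, so the drift term vanishes; the Hessian $(\partial_{i}\partial_{j}f(y))$ is negative semi-definite, so $\sum_{ij}a_{ij}(y)\partial_{i}\partial_{j}f(y) \leq 0$ because $a(y)$ is non-negative definite; the killing term $-c(y)f(y) \leq 0$ since $c(y), f(y) \geq 0$; and in the integrand $f(y + z) - f(y) \leq 0$ while the compensating term $\sum_{i}z_{i}\partial_{i}f(y)\mathbf{1}_{B_{1}}(z)$ vanishes. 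Summing gives $Af(y) \leq 0$.

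For the converse, fix $x \in \Rd$ and study the linear functional $T_{x}f := Af(x)$ on $C_{c}^{\infty}(\Rd)$. Applying the PMP to $-f$ yields the \emph{almost positivity} of $A$: if $f \geq 0$ and $f(x) = 0$, then $Af(x) \geq 0$. A standard consequence of almost positivity is a local seminorm estimate bounding $T_{x}f$ in terms of the $C^{2}$--norm of $f$ near its support, so that $T_{x}$ is a distribution of order at most two; this order bound is exactly what makes a second--order truncation the correct one. I would therefore fix cutoff functions reproducing the monomials of degree $\leq 2$ near $x$: a bump $\chi$ with $\chi \equiv 1$ near $x$, and $\chi_{i}, \chi_{ij} \in C_{c}^{\infty}(\Rd)$ agreeing with $(\cdot - x)_{i}$ and $(\cdot - x)_{i}(\cdot - x)_{j}$ in a neighbourhood of $x$. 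Writing $f = f(x)\chi + \sum_{i}\partial_{i}f(x)\chi_{i} + \tfrac{1}{2}\sum_{ij}\partial_{i}\partial_{j}f(x)\chi_{ij} + R_{f}$, where the remainder $R_{f}$ vanishes to second order at $x$, and applying $T_{x}$, I would read off the candidate coefficients $c(x) = -T_{x}\chi$, $b_{i}(x) = T_{x}\chi_{i}$, and $a_{ij}(x) = \tfrac{1}{2}T_{x}\chi_{ij}$ after symmetrisation.

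The sign and definiteness constraints then follow by feeding well--chosen test functions into the PMP and almost positivity. The bump $\chi$ has a maximum at $x$, so the PMP forces $c(x) = -T_{x}\chi \geq 0$. For $\xi \in \Rd$, applying almost positivity to the non--negative functions $\tfrac{1}{2}(\xi \cdot (\cdot - x))^{2}\psi$, with $\psi \equiv 1$ near $x$ and support shrinking to $x$, suppresses the integral contribution and leaves $\xi^{\top}a(x)\xi \geq 0$, i.e. $a(x)$ is non--negative definite. The heart of the argument is the nonlocal part: the residual functional $f \mapsto T_{x}R_{f}$, defined on functions vanishing to second order at $x$, is positive on those supported away from $x$ by almost positivity. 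A Riesz--representation argument then produces a Radon measure $\nu(x, \cdot)$ on $\Rd \setminus \{0\}$ (after the shift $z = y - x$) representing it, and finiteness of $T_{x}$ on second--order--vanishing bumps forces the L\'{e}vy integrability $\int_{\Rd \setminus \{0\}}(|z|^{2} \wedge 1)\,\nu(x, dz) < \infty$.

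The main obstacle is exactly this decomposition: separating the part of $T_{x}$ supported at the single point $x$ --- which, by the Schwartz structure theorem for distributions supported at a point together with the order constraint coming from positivity, must be a differential operator of order at most two --- from the genuinely nonlocal remainder, and then verifying that the latter integrates against a bona fide L\'{e}vy measure, with the correct integrability near the origin and finite mass away from it. Uniqueness I would dispatch last: the measure $\nu(x, \cdot)$ is pinned down by evaluating $A$ on functions of the form $\phi(\cdot - x)$ with $\phi$ supported away from the origin (all local terms and the compensator then vanishing at $x$), and once $\nu$ is fixed the remaining local operator of order at most two has its coefficients $c(x), b(x), a(x)$ read off by testing against the coordinate--type functions $\chi, \chi_{i}, \chi_{ij}$, so the entire quadruple is forced.
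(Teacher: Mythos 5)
Your proposal is correct, and it is essentially the classical Courr\`{e}ge--Hoh argument: almost positivity yielding a local order-$2$ estimate for the functionals $T_{x}$, a Riesz-type representation of the nonlocal part by a measure $\nu(x,\cdot)$ whose L\'{e}vy integrability comes from testing against functions vanishing to second order at $x$, and the Schwartz structure theorem identifying the point-supported remainder as a differential operator of order at most two with the requisite sign constraints (note only that your provisional identifications such as $c(x)=-T_{x}\chi$ pick up $\nu$-contributions and are not the canonical coefficients, a point your subsequent decomposition step repairs). The paper states Theorem \ref{Courrth} without proof, citing Courr\`{e}ge \cite{Courr} and Hoh \cite{Hoh}, but the argument it sketches for the Lie group analogue in Section 4.2 --- construct $\mu$ by positivity on functions supported away from the base point, subtract the compensated functional $S$, and analyse $P=T-S$ as an order-$2$ distribution supported at $\{e\}$ --- follows exactly the route you describe, so your approach matches the paper's.
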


In the same paper, Courr\`{e}ge proved that $A$ is a pseudo--differential operator:
$$ Af(x) = \int_{\Rd}e^{i x \cdot y}\eta(x, y)\widehat{f}(y)dy,$$
where $\widehat{f}$ denotes the usual Fourier transform: $\widehat{f}(y) = \frac{1}{(2\pi)^{d/2}}\int_{\Rd}e^{-i x \cdot y}f(x)dx$, and $\eta$ is the symbol of the operator where

\begin{eqnarray} \label{symbol}
\eta(x,y) & = & - c(x) + i b(x) \cdot y - a(x)y \cdot y  \nonumber \\
& + & \int_{\Rd \setminus \{0\}}(e^{i x \cdot y} -1  - i x \cdot y {\bf 1}_{B_{1}}(y))\nu(x,dy), \end{eqnarray}
so that formally:

\begin{equation} \label{form}
\eta(x,y) = e^{-i x \cdot y}A(e^{i x \cdot y}).
\end{equation}

If $A$ is the generator of a L\'{e}vy process in $\Rd$, then $c = 0, b, a$ and $\nu$ are independent of the value of $x \in \Rd$, (\ref{Courrth1}) is the Euclidean version of (\ref{hu}) and $\eta$ is the characteristic exponent whose form is that of the classical L\'{e}vy--Khintchine formula (see e.g. Chapter 3 of \cite{Appbk} and also section 5 below). The general result (\ref{symbol}) is important as it is a valuable source of probabilistic information about rich Feller processes, i.e. those for which $C_{c}^{\infty}(\Rd) \subseteq$ Dom$(A)$, and so come under the auspices of this theory. See \cite{BSW} and references therein for details.

The Courr\`{e}ge theorem has been generalised to manifolds, first by Courr\`{e}ge in the compact case \cite{Courr1} and then by Courr\`{e}ge, Bony and Prioret \cite{BCP} in the general case. These authors succeeded in showing that if $A:C_{c}^{\infty}(M) \rightarrow {\mathcal F}(M)$ satisfies the PMP, then it has a decomposition of similar form to (\ref{Courrth1}) relative to a local chart at a point. So they were able to describe the form of the operator in local co--ordinate systems. If the manifold is a Lie group or a symmetric space, we can obtain a global formalism.

\subsection{The Courr\`{e}ge Theorem on Lie Groups}

We summarise some of the main results of \cite{AT1}. Let $G$ be a Lie group with Lie algebra $\g$. Here the strategy is to imitate the approach in \cite{Hoh}, but with the vector fields $\{\partial_{1}, \ldots, \partial_{d}\}$ in Euclidean space replaced by the Lie algebra basis $\{Y_{1}, \ldots, Y_{d}\}$ for $\g$. We say that a linear functional $T: C_{c}^{\infty}(G) \rightarrow \R$ satisfies the {\it positive maximum principle} (PMP), if whenever $f \in C_{c}^{\infty}(G)$ with $f(e) = \sup_{\tau \in G}f(\tau) \geq 0$ then $Tf \leq 0$. Then the linear operator $A$ satisfies the PMP if and only if each of the linear functionals $A_{g}$ satisfy the PMP, where $A_{g}f:=A(L_{g^{-1}}f)(g)$ for each $g \in G, f \in C_{c}^{\infty}(G)$. Here $L_{g}$ is the usual left translation defined by $L_{g}f(\sigma) = f(g\sigma)$ for $\sigma \in G$. We can recover the action of $A$ from that of the $A_{g}'s$ by $Af(g) = A_{g}L_{g}f$.
So the problem is now reduced to studying the PMP for linear functionals $T$. Now it can be shown that  if $T$ satisfies the PMP, then it is  a distribution of order $2$. This is not a distribution in the usual sense. It is defined in the same way, but with the role of each partial derivative $\partial_{i}$ replaced by $Y_{i}$ for $i = 1, \ldots, d$.
	
We can then prove the first important result
\begin{theorem} \label{PMPLF}
	Let $T:C_c^\infty(G)\to \R$ be a linear functional satisfying the PMP. Then there exists $c \geq 0, b = (b_{1}, \ldots, b_{d}) \in \Rd$, a non--negative definite symmetric $d \times d$ real--valued matrix $a = (a_{ij})$, and a L\'{e}vy measure $\mu$ on $G$ such that for all $f \in C_c^\infty(G)$,
	\begin{eqnarray} \label{distr_form}
	Tf & = & \sum_{i,j =1}^d a_{ij}Y_iY_jf(e) + \sum_{i=1}^d b_i Y_if(e) - cf(e)\nonumber \\ & + & \int_{G\{e\}} \left(f(g)-f(e) - \sum_{i=1}^d x_i(g) Y_if(e)\right) \mu(dg),
\end{eqnarray}
	\end{theorem}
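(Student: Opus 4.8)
The plan is to follow the route sketched before the statement: use that $T$ is a distribution of order $2$ relative to the $Y_i$, separate the behaviour of $T$ away from $e$ from its behaviour at $e$, and extract the four characteristics from these two pieces. Throughout I fix the canonical coordinates $x_1,\dots,x_d\in C_c^{\infty}(G)$ with $x_i(e)=0$ and $Y_ix_j(e)=\delta_{ij}$; these provide a second-order Taylor expansion $f(g)=f(e)+\sum_i x_i(g)Y_if(e)+\tfrac12\sum_{i,j}x_i(g)x_j(g)Y_iY_jf(e)+o\big(\sum_i x_i(g)^2\big)$ as $g\to e$, valid for every $f\in C_c^{\infty}(G)$.

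First I would record the basic positivity statement. If $f\in C_c^{\infty}(G)$ is non-negative and vanishes in a neighbourhood of $e$, then $f(e)=0$ and $(-f)(e)=0=\sup(-f)\ge 0$, so the PMP applied to $-f$ gives $T(-f)\le 0$, that is $Tf\ge 0$. Thus $T$ restricts to a positive linear functional on $C_c^{\infty}(G\setminus\{e\})$, and the Riesz representation theorem yields a unique Radon measure $\mu$ on $G\setminus\{e\}$ with $Tf=\int_{G\setminus\{e\}}f\,d\mu$ for all such $f$. This uniqueness already pins down the jump characteristic.

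The main analytic obstacle is to upgrade $\mu$ to a genuine L\'{e}vy measure, that is, to prove $\int_{U\setminus\{e\}}\sum_i x_i(g)^2\,\mu(dg)<\infty$ and $\mu(U^c)<\infty$ for a canonical neighbourhood $U$ of $e$. Here I would test $T$ against functions that coincide with $\sum_i x_i^2$ on shrinking balls around $e$ and bound the outcome by the order-$2$ continuity of $T$; the subtlety is that the $x_i$ live only on $U$, so one must insert cut-offs and control the resulting error terms uniformly as the balls shrink. Granting the L\'{e}vy property, the Taylor estimate above shows that $\int_{G\setminus\{e\}}\big(f(g)-f(e)-\sum_i x_i(g)Y_if(e)\big)\mu(dg)$ converges for every $f\in C_c^{\infty}(G)$.

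Finally I set $Sf:=Tf-\int_{G\setminus\{e\}}\big(f(g)-f(e)-\sum_i x_i(g)Y_if(e)\big)\mu(dg)$. For $f$ supported off $e$ this equals $Tf-\int f\,d\mu=0$, so $\supp S\subseteq\{e\}$, and $S$ inherits order $\le 2$; hence $S$ must take the form $Sf=\alpha f(e)+\sum_i\beta_iY_if(e)+\sum_{i,j}\gamma_{ij}Y_iY_jf(e)$. Because $Y_iY_jf(e)-Y_jY_if(e)=[Y_i,Y_j]f(e)$ is first order, I may take $(\gamma_{ij})$ symmetric, folding its antisymmetric part into the $\beta_i$. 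The signs then come from feeding the PMP suitable $2$-jets at $e$: taking $f\le 0$ with $f(e)=0$, $Y_if(e)=0$ and prescribed Hessian $H$, supported in a shrinking neighbourhood so that the jump integral disappears in the limit, gives $\sum_{i,j}\gamma_{ij}H_{ij}\le 0$ for every negative-semidefinite $H$, hence $(\gamma_{ij})$ is non-negative definite and we set $a=\gamma$; taking $f$ flat to second order at $e$ with $f(e)=\sup f$ and letting $f\uparrow 1$ kills the integral and forces $\alpha\le 0$, so $c=-\alpha\ge 0$, while $b=\beta$ is unconstrained. Uniqueness of $(a,b,c,\mu)$ then follows by reading each coefficient off test functions with prescribed value and first and second $Y$-derivatives at $e$.
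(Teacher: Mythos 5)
Your proposal is correct and follows essentially the same route as the paper's proof: a positivity (Riesz) argument off $e$ yields the measure $\mu$, which is then shown to be a L\'{e}vy measure, and subtracting the compensated integral leaves a distribution of order $2$ supported at $\{e\}$, which must have the stated local form with the signs of $a$ and $c$ extracted from the PMP. The only deviations are cosmetic: you obtain the order-$2$ property of the integral term directly from the Taylor estimate rather than via the PMP as in the paper, and you sketch the sign arguments for $a \geq 0$ and $c \geq 0$ (as well as the symmetrisation of the second-order coefficients via commutators) that the paper defers to \cite{AT1} --- these sketches are sound.
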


The proof involves using a positivity argument (essentially the Riesz lemma) to show that there exists a Borel measure $\mu$ on $G \setminus \{e\}$ so that for all $f \in C^\infty_c(G\setminus \{e\})$
	\begin{equation}\label{f_mu}
 Tf = \int_{G\setminus \{e\}}f(g)\mu(dg).
	\end{equation}
It then turns out that $\mu$ is a L\'{e}vy measure. Next we introduce a linear functional $S: C_c^\infty(G) \to \R$, by
	\begin{equation*}
	Sf := \int_{G \setminus \{e\}}\left[f(g) - f(e)-\sum_{i=1}^d x_i(g)Y_i f(e)\right] \mu(dg).
	\end{equation*}

Then $S$ satisfies the PMP and so is a distribution of order $2$. Hence so is $P = T-S$. But supp$(P) \subseteq \{e\}$ and so $P$ must take the form

\begin{equation*}
	Pf = \sum_{i,j=1}^d a_{ij}Y_iY_jf(e) +\sum_{i=1}^d b_i Y_{i}f(e) - cf(e).
\end{equation*}

It remains to prove that the matrix $(a_{ij})$ is positive definite and that $c \geq 0$. For this we refer the reader to the paper \cite{AT1}. Once Theorem \ref{PMPLF} is established, we can use the ``left translation trick'' described above to get the main result:

\begin{theorem} \label{PMP1} The mapping $A: C_{c}^{\infty}(G) \rightarrow {\mathcal F}(G)$ satisfies the PMP if and only if there exist functions $c, b_{i}, a_{jk}~(1 \leq i,j,k \leq d)$ from $G$ to $\R$, wherein $c$ is non--negative, and the matrix $a(\sigma): = (a_{jk}(\sigma))$ is non--negative definite and symmetric for all $\sigma \in G$, and a L\'{e}vy kernel\footnote{We say that $\mu$ is a {\it L\'{e}vy kernel} if $\mu(\sigma, \cdot)$ is a L\'{e}vy measure for all $\sigma \in G$.} $\mu$, such that for all $f \in C_{c}^{\infty}(G), \sigma \in G$,
\begin{eqnarray} \label{PMP2}
Af(\sigma) & = & -c(\sigma)f(\sigma) + \sum_{i=1}^{d}b_{i}(\sigma)Y_{i}f(\sigma) + \sum_{j,k = 1}^{d}a_{jk}(\sigma)Y_{j}Y_{k}f(\sigma) \nonumber \\
& + & \int_{G \setminus \{e\}}\left(f(\sigma \tau) - f(\sigma) - \sum_{i=1}^{d}x_{i}(\tau)Y_{i}f(\sigma)\right)\mu(\sigma, d\tau).
\end{eqnarray}
\end{theorem}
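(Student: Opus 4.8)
The plan is to deduce this operator--level statement from the functional--level Theorem \ref{PMPLF} by means of the ``left translation trick'' recorded above, which asserts that $A$ satisfies the PMP if and only if each of the linear functionals $A_g$, defined by $A_g f := A(L_{g^{-1}}f)(g)$, satisfies the PMP, and that $A$ is recovered through $Af(g) = A_g L_g f$. Thus the content of the theorem is obtained by feeding the canonical form (\ref{distr_form}) for PMP functionals into this reconstruction formula, while the converse reduces to a direct maximum--principle verification.

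For the ``only if'' direction, I would first assume $A$ satisfies the PMP, so that every $A_g$ is a PMP functional. Theorem \ref{PMPLF} then furnishes, for each fixed $g \in G$, a quadruple consisting of $c(g) \geq 0$, a vector $b(g) \in \Rd$, a non--negative definite symmetric matrix $a(g) = (a_{jk}(g))$, and a L\'evy measure $\mu(g, \cdot)$ on $G$, such that $A_g$ acts by (\ref{distr_form}). The key computation is then to evaluate $A_g$ on the translate $L_g f$. Since each $Y_i$ is left--invariant, $Y_i(L_g f) = L_g(Y_i f)$ and hence $Y_i(L_g f)(e) = Y_i f(g)$, and likewise $Y_j Y_k(L_g f)(e) = Y_j Y_k f(g)$; moreover $(L_g f)(e) = f(g)$ and $(L_g f)(\tau) = f(g\tau)$. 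Substituting these identities into (\ref{distr_form}) with $h = L_g f$ and using $Af(g) = A_g L_g f$ produces exactly the claimed form (\ref{PMP2}) at $\sigma = g$.

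For the ``if'' direction I would verify the PMP directly. Suppose $f \in C_c^\infty(G)$ attains its global supremum at $\sigma$, with $f(\sigma) = \sup_{\tau} f(\tau) \geq 0$. At this interior maximum the first--order terms vanish, $Y_i f(\sigma) = 0$, so the drift contribution is zero and the compensating term in the integrand disappears; the second--order term $\sum_{j,k} a_{jk}(\sigma) Y_j Y_k f(\sigma)$ is non--positive because $a(\sigma)$ is non--negative definite and the matrix of second derivatives is non--positive at a maximum; the killing term $-c(\sigma) f(\sigma)$ is non--positive since $c(\sigma) \geq 0$ and $f(\sigma) \geq 0$; and the integrand reduces to $f(\sigma\tau) - f(\sigma) \leq 0$, which integrates to a non--positive quantity against the positive measure $\mu(\sigma, \cdot)$. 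Summing, $Af(\sigma) \leq 0$, as required.

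The main obstacle I anticipate is not any single direction of the equivalence --- each is essentially mechanical once Theorem \ref{PMPLF} is in hand --- but rather the regularity of the characteristics as functions of $\sigma$. The functional theorem produces $c, b, a, \mu$ pointwise in $g$ through a uniqueness argument, and one must still show that $\sigma \mapsto c(\sigma), b_i(\sigma), a_{jk}(\sigma)$ are genuine (measurable) functions and that $\mu$ is a genuine L\'evy kernel, i.e. that $(\sigma, \cdot) \mapsto \mu(\sigma, \cdot)$ depends measurably on $\sigma$. I would extract this by testing $A$ against suitable fixed families of functions in $C_c^\infty(G)$ --- for instance recovering $a_{jk}(\sigma)$ and $b_i(\sigma)$ from $A$ applied to coordinate functions and their products near $\sigma$, and recovering $\mu(\sigma, \cdot)$ from $A$ applied to functions supported away from the diagonal --- so that measurability in $\sigma$ is inherited from the pointwise values $Af(\sigma)$. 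Care with the second--order verification in the converse, where one works in local canonical coordinates and must relate the left--invariant operators $Y_j Y_k$ to the ordinary Hessian at the maximum, is the only other point requiring attention.
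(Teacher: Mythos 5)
Your proposal is correct and takes essentially the same route as the paper: the paper's proof of Theorem \ref{PMP1} consists precisely of applying Theorem \ref{PMPLF} to each functional $A_{g}f = A(L_{g^{-1}}f)(g)$ and reconstructing $A$ via $Af(g) = A_{g}L_{g}f$, exactly as in your ``only if'' direction, with the converse being the direct maximum--principle verification you describe. Your closing remarks correctly identify the only delicate technical points (measurability of $\sigma \mapsto (c,b,a,\mu)$ and relating $Y_{j}Y_{k}f$ to the Hessian at a critical point), which the paper itself does not spell out but defers to \cite{AT1}.
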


In \cite{AT1} sufficient conditions are imposed on the coefficients to ensure that $A: C_{c}^{\infty}(G) \rightarrow C_{0}(G)$, and we will assume that these hold from now on.

We can represent linear operators satisfying the PMP as pseudo--differential operators when $G$ is compact. To carry this out we need the unitary dual $\G$ comprising equivalence classes (with respect to unitary equivalence) of all irreducible representations of $\G$ in some complex (finite--dimensional) Hilbert space. So if $\pi \in \G$, then for each $g \in G, \pi(g)$ is a unitary matrix acting in a space $V_{\pi}$ which has dimension $d_{\pi}$. The Fourier transform $\widehat{f}$ of $f \in C_{c}^{\infty}(G)$ is the matrix--valued function on $\G$ defined by
$$ \widehat{f}(\pi) = \int_{G}f(g)\pi(g^{-1})dg,$$
where integration is with respect to normalised Haar measure on $G$. The Ruzhansky--Turunen theory of pseudo--differential operators \cite{RT}, starts from the Fourier inversion formula (just as in the classical case):
$$ f(g) = \sum_{\pi \in \G}d_{\pi}\tr(\widehat{f}(\pi)\pi(g)).$$
Then we say that $A: C_{c}^{\infty}(G) \rightarrow C_{0}(G)$ is a pseudo--differential operator with matrix valued symbol $j_{A}(\sigma, \pi)$ acting in $V_{\pi}$ for each $\sigma \in G, \pi \in \G$ if

$$ Af(g) = \sum_{\pi \in \G}d_{\pi}\tr(j_{A}(\sigma, \pi)\widehat{f}(\pi)\pi(g)).$$

In \cite{App3} it was shown that the generators of  L\'{e}vy processes on compact Lie groups are pseudo differential operators, and this was further extended to some classes of Feller processes in \cite{App2}. Now we have the more general result:

\begin{prop} \label{PDo1} If $A: C_{c}^{\infty}(G) \rightarrow C_{0}(G)$ satisfies the PMP then it is a pseudo--differential operator with symbol
\begin{eqnarray} \label{PDo2}
j_{A}(\sigma, \pi) & = & -c(\sigma)I_{\pi} + \sum_{i=1}^{d}b_{i}(\sigma)d\pi(Y_{i}) + \sum_{j,k = 1}^{d}a_{jk}(\sigma)d\pi(Y_{j})d\pi(Y_{k}) \nonumber \\
& + & \int_{G \setminus \{e\}}\left(\pi(\tau) - I_{\pi} - \sum_{i=1}^{d}x_{i}(\tau)d\pi(Y_{i})\right)\mu(\sigma, d\tau),
\end{eqnarray}
for each $\pi \in \G, \sigma \in G$.
\end{prop}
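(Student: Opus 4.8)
The plan is to verify directly that the operator $A$ of Theorem \ref{PMP1} is pseudo--differential with the claimed symbol, by computing the action of $A$ on the matrix coefficients of each $\pi$ and reading off the symbol. Since $G$ is compact we have $C_{c}^{\infty}(G) = C^{\infty}(G)$, so every $\pi \in \G$ has smooth matrix coefficients and $A$ may be applied to the matrix--valued function $g \mapsto \pi(g)$ entrywise. The two algebraic facts driving the calculation are the following. For a left--invariant vector field $Y_{i}$ (identified with an element of $\g$), differentiation along $Y_{i}$ gives $Y_{i}\pi(\sigma) = \pi(\sigma)\,d\pi(Y_{i})$, where $d\pi$ is the derived representation of $\g$ on $V_{\pi}$; iterating yields $Y_{j}Y_{k}\pi(\sigma) = \pi(\sigma)\,d\pi(Y_{j})d\pi(Y_{k})$. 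Secondly, the homomorphism property $\pi(\sigma\tau) = \pi(\sigma)\pi(\tau)$ governs the jump term.

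First I would apply $A$ entrywise to $g \mapsto \pi(g)$ using the Courr\`{e}ge form (\ref{PMP2}). By the two identities above, every summand of (\ref{PMP2}) produces a common left factor $\pi(\sigma)$, and after extracting it one obtains
\begin{equation*}
(A\pi)(\sigma) = \pi(\sigma)\, j_{A}(\sigma, \pi),
\end{equation*}
with $j_{A}$ exactly as in (\ref{PDo2}). This is precisely the Ruzhansky--Turunen characterisation $j_{A}(\sigma,\pi) = \pi(\sigma)^{-1}(A\pi)(\sigma)$ of the symbol, so the symbol is identified. To confirm that this symbol reproduces $A$ through the quantisation formula, I would then insert the inversion formula $f(g) = \sum_{\pi \in \G} d_{\pi}\tr(\widehat{f}(\pi)\pi(g))$ into (\ref{PMP2}), differentiate termwise, use $\pi(\sigma\tau)=\pi(\sigma)\pi(\tau)$ in the integral, and apply cyclicity of the trace, for example $\tr(\widehat{f}(\pi)\pi(\sigma)\,d\pi(Y_{i})) = \tr(d\pi(Y_{i})\,\widehat{f}(\pi)\pi(\sigma))$, to move each contribution to the left of $\widehat{f}(\pi)$. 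Collecting terms gives $Af(\sigma) = \sum_{\pi}d_{\pi}\tr(j_{A}(\sigma,\pi)\widehat{f}(\pi)\pi(\sigma))$ with the same $j_{A}$, as required.

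The main obstacle is analytic rather than algebraic: justifying the termwise differentiation and, above all, the interchange of the Fourier sum over $\G$ with the integral against the L\'{e}vy kernel $\mu(\sigma,\cdot)$, which is singular near $e$. I expect to handle this using the rapid decay of $\widehat{f}(\pi)$ for $f \in C^{\infty}(G)$ (faster than any power of the weight coming from the Casimir/Laplace operator) against the merely polynomial growth of the entries of $d\pi(Y_{i})$ and of $\pi(\tau) - I_{\pi}$. For the jump term one splits $\int_{G\setminus\{e\}} = \int_{U\setminus\{e\}} + \int_{U^{c}}$ over a canonical coordinate neighbourhood $U$ of $e$; the second--order vanishing of $\pi(\tau) - I_{\pi} - \sum_{i}x_{i}(\tau)\,d\pi(Y_{i})$ at $\tau = e$ combines with $\int_{U\setminus\{e\}}\sum_{i}x_{i}^{2}(\tau)\,\mu(\sigma,d\tau) < \infty$ to control the singular part, while $\mu(\sigma, U^{c}) < \infty$ and the compact support of the $x_{i}$ control the remainder. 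The very same estimate shows that the integral defining $j_{A}(\sigma,\pi)$ converges, so the symbol is well defined and the interchange is legitimate.
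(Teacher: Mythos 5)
Your proposal is correct and follows essentially the same route as the paper, which justifies the result precisely through the rigorous identity $j_{A}(\sigma,\pi)=\pi(\sigma^{-1})A\pi(\sigma)$: one applies (\ref{PMP2}) entrywise to the smooth matrix coefficients $\pi_{ij}$ (legitimate since $G$ is compact, so $C_{c}^{\infty}(G)=C^{\infty}(G)$), uses $Y_{i}\pi(\sigma)=\pi(\sigma)\,d\pi(Y_{i})$ and $\pi(\sigma\tau)=\pi(\sigma)\pi(\tau)$ to extract the left factor $\pi(\sigma)$, and reads off (\ref{PDo2}). Your convergence analysis is also sound; the one refinement worth recording is that the uniform second--order bound on the jump integrand should exploit unitarity, e.g. via the Taylor remainder $\pi(\exp X)-I_{\pi}-d\pi(X)=\int_{0}^{1}(1-s)\,d\pi(X)^{2}e^{s\,d\pi(X)}ds$ with $\|e^{s\,d\pi(X)}\|=1$ since $d\pi(X)$ is skew--hermitian, which yields the polynomial-in-$\pi$ constant you need against the rapid decay of $\widehat{f}(\pi)$.
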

Here if $Y \in \g, d\pi(Y)$ is the skew--hermitian matrix acting in $V_{\pi}$ which is uniquely defined by
$$ \pi(\exp(tY)) = e^{t d\pi(Y)},$$
for all $t \geq 0$. In the case where $c = 0$ and the characteristics $b, a$ and $\mu$ are independent of $\sigma \in G$, (\ref{PDo2}) coincides with the form of the L\'{e}vy--Khinchine formula on compact Lie groups (see section 5.5 in \cite{Applebaumbk2}).

Note that the analogue of (\ref{form}) is
$$j_{A}(\sigma, \pi) = \pi(\sigma^{-1})A\pi(\sigma),$$
where $A\pi(\sigma)$ is the matrix $(A\pi_{ij}(\sigma))$, and since $G$ is compact and $\pi_{ij}(\cdot) \in C^{\infty}(G)$ for all $i, j = 1, \ldots d_{\pi}$, this identity is mathematically rigorous.

In \cite{AT2} these ideas are extended to study linear operators satisfying the PMP on a symmetric space $M$. Observe that $M$ may be identified with the homogeneous space $G/K$, where $G$ is the isometry group of $M$ (which turns out to be a Lie group), and $K$ is the compact subgroup of $G$ comprising isometries leaving some given point fixed\footnote{It doesn't matter which point we choose.}. This enables us to use group theoretic techniques as described above, but with additional symmetries arising from the action of $K$. A feature of this theory is that, by using the theory of spherical functions, we can study a class of linear operators on $M$ that satisfy the PMP and are pseudo--differential operators with scalar--valued symbols.

\section{Invariant Markov Processes}

Let $M$ be a manifold under the action of a Lie group $G$, and let $X=(X_t$; $t\geq 0)$ be a Markov process in $M$ with transition semigroup $T_t$ as defined in Section~2. For simplicity,
we write $X_t$ for the process from now on. It should be clear from the context whether $X_t$ is the whole process or just the random variable for a fixed time $t$. The Markov process $X_t$ will be called invariant
under the action of $G$, or $G$-invariant for short, if for any bounded Borel function $f$ on $M$ and $g\in G$,
\begin{equation} \label{inv}
T_t(f\circ g)(x) = (T_tf)(gx), \quad x\in M.
\end{equation}
It is easy to show, see \cite[Proposition~1.1]{LiaoN}, that the $G$-invariance of the Markov process $X_t$ may also be characterized probabilistically as follows: We may think a Markov process $X_t$ as a family of processes,
one for each starting point $x\in M$, all governed probabilistically by the same transition semigroup. If we denote $X_t^z$ for the Markov process starting from $z\in M$, then $X_t$ is $G$-invariant if and only if
for any $g\in G$, $gX_t^z$ and $X_t^{gz}$ are equal in distribution as processes.

The $G$-invariance may also be defined for inhomogeneous Markov processes if $T_t$ in (\ref{inv}) is replaced by $T_{s,t}$.

When $M=\R^d$ and $G=\R^d$ acts as translations on $\R^d$, it is well known that a \cadlag\ Markov process $X_t$ in $\R^d$ is $G$-invariant, or translation-invariant, if and only if it has independent and stationary increments $X_t-X_s$
for $s<t$, that is, if it is a L\'{e}vy process in $\R^d$. The celebrated L\'{e}vy-Khintchine formula gives the characteristic function of each random variable within  a L\'{e}vy process $X_t$:
\[E(e^{i X_t \cdot y }) = e^{t\psi(y)}\]
for any $y=(y_1,\ldots,y_d)\in\R^d$ by
\[\psi(y) = i\sum_{j=1}^d b_jy_j + \frac{1}{2}\sum_{j,k=1}^da_{jk}y_jy_k + \int_{\Rd\setminus\{0\}}(e^{i  z \cdot y } - 1 - i  z \cdot y 1_{B_1}(z))\nu(dz).\]
It provides a useful representation for a L\'{e}vy process in terms of a triple $(b,a,\nu)$, comprising a drift vector $b$, a covariance matrix $a=\{a_{ij}\}$ and a L\'{e}vy measure $\nu$
in the sense that its probability distribution is determined by the triple, and to any such triple, there is an associated L\'{e}vy process, unique in distribution, which also determines the triple. Note that (as we would expect) $\psi$ coincides
with $\eta$, as appears in (\ref{symbol}), when the latter function is constant in the $x$-variable.

In the rest of paper, we will present some results on the more general invariant Markov processes under actions of Lie groups. The reader is referred to \cite{LiaoN} for more details. 

Because a Lie group $G$ acts on itself by left translations $l_g$: $G\to G$, $x\mapsto gx$, for $g\in G$, we may first consider a Markov process $X_t$ in $G$ invariant under this $G$-action. This is a direct extension
of a translation-invariant Markov process in $\R^d$ to a Lie group, and may be identified with a  L\'{e}vy process in $G$ as defined in Section~3, which is characterized by independent and stationary increments
of the form $X_s^{-1}X_t$ for $s<t$. As already mentioned there, a  L\'{e}vy process $X_t$ in $G$ is a Feller process, and
its generator $L$ has an explicit expression given by Hunt's formula (\ref{hu}), expressed in terms of a drift vector $b$, a covariance matrix $a$ and a  L\'{e}vy measure $\nu$. Thus, a  L\'{e}vy process in a Lie group
is represented by a triple $(b,a,\nu)$ just as for a  L\'{e}vy process in $\R^d$.

More generally, we may consider an invariant Markov process $X_t$ in a manifold under the transitive action of a Lie group $G$. Here a transitive action means that for any two points $x$ and $y$ in $M$, there is $g\in G$ such that $gx=y$.
Fix a point $o\in M$ and let $K=\{g\in G$; $go=o\}$. Then $K$ is a closed subgroup $G$, called the isotropy subgroup at point $o$. The manifold $M$ may be identified with the homogeneous space $G/K$, the space of left cosets $gK$, $g\in G$,
via the map $gK\mapsto go$, and the $G$-action on $M$ corresponds to the natural $G$-action on $G/K$ given by $(g,hK)\mapsto ghK$ for $g,h\in G$. Thus, a $G$-invariant Markov process in $M$ may be naturally identified
with a $G$-invariant Markov process in the homogeneous space $G/K$.

A $G$-invariant Markov processes in $G/K$ may also be characterised by independent and stationary increments as for a Markov process in $G$ invariant under left translations, but to state this precisely will require
a little preparation.

Let $M=G/K$ and let $\pi$: $G\to M$ be the natural projection $g\mapsto gK$. A Borel map $S$: $M\to G$ will be called a section map if
\[\pi\circ S = {\rm id}_M \ \mbox{(the identity map on $M$)}.\]
A section map always exists, but is not unique. It may not be smooth on $M$, but can be chosen to be smooth near any point in $M$.

In general, there is no natural product structure on $M=G/K$, but we may define $xy=S(x)y$ with the choice of a section map $S$. Although this definition depends on the choice of $S$,
if $Z$ is a random variable in $M$ that has a $K$-invariant distribution, that is, if $kZ$ is equal to $Z$ in distribution for any $k\in K$, then the distribution of $xZ$ does not depend on the choice of $S$. The increment of
a process $X_t$ in $M=G/K$ over the time interval $(s,\,t]$ is defined by $X_s^{-1}X_t = S(X_s)^{-1}X_t$, and its distribution does not depends on $S$ when $X_t$ is a $G$-invariant Markov process.

\begin{prop} \label{princre}
A process $X_t$ in $M=G/K$ with natural filtration $\{{\mathcal F}_t\}$ is a $G$-invariant Markov process if and only if it has independent and stationary increments in the sense that
for any $s<t$ and any section map $S$, $X_{s,t}=S(X_s)^{-1}X_t$ is independent of $\mathcal{F}_s$ and its distribution depends only on $t-s$, not on $S$.
\end{prop}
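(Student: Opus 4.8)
The plan is to reduce every transition probability to a single measure based at the fixed point $o$ and to read off the increment law from it. I would write $p_u(x,\cdot)$ for the (homogeneous) transition probability and set $\mu_u:=p_u(o,\cdot)$; the standing assumption that $M$ is locally compact, second countable and Hausdorff guarantees that regular conditional distributions exist, so every conditional law below is genuinely defined. The single fact I would use repeatedly is that the analytic invariance (\ref{inv}) is equivalent to the measure identity $g_{\ast}p_u(x,\cdot)=p_u(gx,\cdot)$ for all $g\in G$, $x\in M$, $u\ge 0$, where $g_{\ast}$ denotes push-forward under the action of $g$. Throughout I would use the coset identities $\pi(gh)=g\,\pi(h)$, $S(x)o=x$, and the fact that any two section maps differ by a $K$-valued factor.

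For the forward implication I would assume $X_t$ is a $G$-invariant Markov process and compute the increment law directly. By the Markov property and homogeneity, the conditional law of $X_t$ given $\mathcal F_s$ depends only on $X_s$ and equals $p_{t-s}(X_s,\cdot)$, so the conditional law of $X_{s,t}=S(X_s)^{-1}X_t$ given $X_s=x$ is $S(x)^{-1}_{\ast}p_{t-s}(x,\cdot)$. Applying the invariance identity with $g=S(x)^{-1}$, and using $S(x)^{-1}x=S(x)^{-1}S(x)o=o$, collapses this to $p_{t-s}(o,\cdot)=\mu_{t-s}$. Since the result is independent of $x$, the increment $X_{s,t}$ is automatically independent of $\mathcal F_s$ and its law depends only on $t-s$. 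Finally, $ko=o$ for $k\in K$ forces $k_{\ast}\mu_u=\mu_u$ by the same identity, and this $K$-invariance is exactly what makes the increment law independent of the chosen section $S$.

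For the converse I would assume the stated increment property and let $\mu_u$ be the common law of $X_{s,t}$ with $t-s=u$. From $X_t=S(X_s)X_{s,t}$, the $\mathcal F_s$-measurability of $S(X_s)$ together with the independence of $X_{s,t}$ from $\mathcal F_s$ shows that the conditional law of $X_t$ given $\mathcal F_s$ is a function of $X_s$ alone; hence $X_t$ is Markov with homogeneous transition probability $p_u(x,\cdot)$ equal to the law of $S(x)Z$, $Z\sim\mu_u$. I would upgrade the hypothesised $S$-independence of the increment law to $K$-invariance of $\mu_u$ by running the competing section $S'(\cdot)=S(\cdot)k$ for a fixed $k\in K$, which gives $k^{-1}_{\ast}\mu_u=\mu_u$ and in particular makes $p_u(x,\cdot)$ well defined. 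To verify (\ref{inv}) I would observe that $g\,S(x)$ and $S(gx)$ both project to $gx$, so $g\,S(x)=S(gx)k$ for some $k\in K$; therefore $g_{\ast}p_u(x,\cdot)$ is the law of $g\,S(x)Z=S(gx)\,kZ$, which by $K$-invariance of $\mu_u$ equals the law of $S(gx)Z$, namely $p_u(gx,\cdot)$. Rewriting $g_{\ast}p_u(x,\cdot)=p_u(gx,\cdot)$ as $T_u(f\circ g)(x)=(T_uf)(gx)$ then yields the $G$-invariance.

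The main obstacle will not be the algebra but the careful bookkeeping of the section map: since $S$ is only Borel, I would need to check that the kernel $x\mapsto p_u(x,\cdot)$ is measurable and that the conditional-law manipulations are legitimate. The conceptual crux, present in both directions, is the equivalence between $S$-independence of the increment distribution and $K$-invariance of the base-point law $\mu_u$; once this is isolated, the geometry of $G/K$ enters only through the coset identities $S(x)o=x$ and $g\,S(x)=S(gx)k$, and everything else is a direct computation.
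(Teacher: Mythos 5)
Your argument is correct and is essentially the proof the paper points to — the paper itself gives no proof, deferring to \cite[Theorem~1.14]{LiaoN} — since both directions rest on translating the transition kernel to the base point via $g_{\ast}p_u(x,\cdot)=p_u(gx,\cdot)$ with $g=S(x)^{-1}$ (so that $S(x)^{-1}x=o$ collapses the conditional increment law to $\mu_{t-s}=p_{t-s}(o,\cdot)$), and on extracting the $K$-invariance of $\mu_u$ from the section-independence hypothesis, your twisted section $S'(\cdot)=S(\cdot)k$ being exactly the standard device, with $gS(x)=S(gx)k$ then yielding (\ref{inv}). No gaps: the measurability points you flag (Borel section, joint measurability of $(x,z)\mapsto S(x)z$, conditioning on functions of $(X_s,X_t)$ via a monotone class argument) are routine and resolve as you indicate.
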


The easy proof of the above proposition may be found in \cite[Theorem~1.14]{LiaoN}. Because of this result, a $G$-invariant Markov process in $M=G/K$ will be called a L\'{e}vy process.

When $K$ is compact, a L\'{e}vy process in $M=G/K$ is a Feller process and its generator is determined by Hunt \cite{Hunt} to have essentially the same form as the generator of a L\'{e}vy process in $G$. To state this precisely will require some additional preparation.

For any $g\in G$, the conjugation map $G\to G$, given by $x\mapsto gxg^{-1}$, fixes the identity element $e$ of $G$. Its differential map at $e$ is a linear bijection from the Lie algebra $\g$ of $G$ to $\g$, denoted
as $\Ad(g)$. Let $\p$ be an $\Ad(K)$-invariant subspace of $\g$, complementary to the Lie algebra $\fk$ of $K$. The linear space $\p$ may be identified with the tangent space $T_oM$ of $M$ at $o$ via the natural
projection $\pi$: $G\to M$. Recall that we have chosen a basis $\{Y_1,\ldots,Y_d\}$ of $\g$.
We may assume $Y_1,\ldots,Y_n$ form a basis of $\p$ and $Y_{n+1},\ldots,Y_d$ form a basis of $\fk$, where $n=\dim(M)$. Recall the coordinate functions $x_1,\ldots,x_d\in C_c^\infty(G)$ associated to the
basis $\{Y_1,\ldots,Y_d\}$ of $\g$ as introduced in Section~3. They may be chosen to satisfy $g=\exp[\sum_{i=1}^nx_i(g)Y_i]\exp[\sum_{i=n+1}^dx_i(g)Y_i]$ for $g$ in a neighborhood of $e$, where $\exp$: $\g\to G$
is the Lie group exponential map.
We now define coordinate functions $y_1,\ldots,y_n$ on $M$ associated to the basis $\{Y_1,\ldots,Y_n\}$ of $\p$ to be functions in $C_c^\infty(M)$ such that $\sigma=\pi\{\exp[\sum_{i=1}^ny_i(\sigma)Y_i]\}$ for $\sigma$
in a neighborhood of $o$. They may be chosen to satisfy $\sum_{i=1}^ny_i(\sigma)\Ad(k)Y_i=\sum_{i=1}^ny_i(k\sigma)Y_i$ for all $\sigma\in M$ and $k\in K$ (see
\cite[Section~3.1]{LiaoN}). Note that $y_i=x_i\circ\pi$ for $1\leq i\leq n$.

For $k\in K$, let $[\Ad(k)]$ denote the matrix representing $\Ad(k)$; $\p\to\p$ under the basis of $\{Y_1,\ldots,Y_n\}$, that is, $\Ad(k)\xi_j=\sum_{i=1}^n[\Ad(k)]_{ij}Y_i$. A vector $b=
(b_1,\ldots,b_n)\in\R^n$ will called $\Ad(K)$-invariant if $[\Ad(k)]b=b$ for $k\in K$ and an $n\times n$ non-negative definite real symmetric matrix $a$ will be called $\Ad(K)$-invariant
if $[\Ad(k)]a[\Ad(k)]'=a$ for $k\in K$, where the prime denotes the matrix transpose. A measure $\mu$ on $M$ is called $K$-invariant if $k\mu=\mu$ for $k\in K$, where $k\mu$
is the measure $k\mu(B)=\mu(k^{-1}(B))$. Note that for a $K$-invariant measure $\mu$, the value of the integral
\[\int_M f(xy)\mu(dy) = \int f(S(x)y)\mu(dy), \quad x\in M,\]
does not depend on the choice of the section map $S$. A L\'{e}vy measure on $M$ is defined in the same way as a L\'{e}vy measure on $G$, given in Section~3, but using the coordinate functions on $M$ and
a neighborhood $U$ of $o$, and with the additional requirement that it is $K$-invariant.

The following result is Hunt's generator formula on $M=G/K$. The present form is taken from
\cite{LiaoN}. By this result, a L\'{e}vy process in $M=G/K$ is represented, in distribution, by a triple $(b,a,\nu)$ comprising an $\Ad(K)$-invariant vector $b$, an $\Ad(K)$-invariant matrix $a$
and a L\'{e}vy measure $\nu$ on $M$, just as for a L\'{e}vy process in $\R^d$.

\begin{theorem} \label{huth}
A L\'{e}vy process in $M=G/K$ is a Feller process. Let $A$ be its generator.  Then the domain of $A$ contains $C_c^\infty(M)$, and there is a unique triple $(b,a,\nu)$ as above such that $A$ restricted to $C_c^\infty(M)$
is given by (\ref{hu}) with $d$ replaced by $n$, $x_i$ by $y_i$ and $G\backslash\{e\}$ by $M\backslash\{o\}$. Moreover, given any triple $(b,a,\nu)$, there is a L\'{e}vy process in $M$ starting at $o$, unique in distribution, such that
its generator has the above expression.
\end{theorem}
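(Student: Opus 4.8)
The plan is to reduce everything to Hunt's formula (\ref{hu}) on the group $G$ by lifting the process from $M=G/K$ to $G$, exploiting the compactness of $K$. First I would dispose of the Feller property. Writing $\mu_t$ for the law of the process started at $o$, the $K$--invariance of the process (which holds since $ko=o$ for $k\in K$, so $kX_t^o\st X_t^{ko}=X_t^o$) makes $\mu_t$ a $K$--invariant measure; hence $T_tf(x)=\int_M f(xy)\,\mu_t(dy)$ is well defined independently of the section map $S$. Continuity of the action $(x,y)\mapsto xy$ together with dominated convergence gives $T_t(C_0(M))\subseteq C_0(M)$, while stochastic continuity gives $\mu_t\to\delta_o$ weakly and hence the strong continuity $\|T_tf-f\|\to0$. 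So $(T_t)$ is a Feller semigroup with generator $A$.

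The heart of the argument is the lift. I would show that a \LP\ $X_t$ in $M$ is exactly the projection $X_t=\pi(g_t)$ of a \LP\ $g_t$ in $G$ (with $g_0=e$) whose characteristics are invariant under the conjugations $c_k(g)=kgk^{-1}$, $k\in K$. The mechanism is that $\Ad(K)$--invariance of the triple on $\g$ is precisely conjugation invariance of the law, $kg_tk^{-1}\st g_t$; since right multiplication by $k\in K$ does not change cosets, this yields $\pi(kg_t)=\pi(kg_tk^{-1})\st\pi(g_t)$, i.e. $\mu_t$ is $K$--invariant, which is exactly what is needed for $\pi(g_t)$ to descend to a well-defined $G$--invariant Markov process in $M$. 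The existence of such a lift for a given \LP\ in $M$, together with the matching of characteristics, is the technical core, and I would take it from \cite{LiaoN}.

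Granting the lift, I would transfer Hunt's formula. Because $K$ is compact, $f\in C_c^\infty(M)$ pulls back to $f\circ\pi\in C_c^\infty(G)\subseteq\Dom(A_G)$, and $T_tf(o)=\E(f(\pi(g_t)))=T_t^G(f\circ\pi)(e)$ gives $Af(o)=A_G(f\circ\pi)(e)$ together with $C_c^\infty(M)\subseteq\Dom(A)$, the uniform convergence defining the generator being inherited from the $G$--case via $G$--invariance. Now I would compute $A_G(f\circ\pi)(e)$ from (\ref{hu}): for $Y_j\in\fk$ one has $Y_j(f\circ\pi)(e)=\frac{d}{ds}f(\pi(\exp sY_j))|_0=0$, so only the $\p$--directions survive in the first--order term; symmetry of $a$ kills the $\fk$--valued bracket contributions in $\sum a_{ij}Y_iY_j(f\circ\pi)(e)$, leaving the pure $\p$ second--order part; and since $y_i=x_i\circ\pi$ for $1\le i\le n$, the jump integrand $(f\circ\pi)(\tau)-(f\circ\pi)(e)-\sum_i x_i(\tau)Y_i(f\circ\pi)(e)$ becomes $f(\sigma)-f(o)-\sum_{i=1}^n y_i(\sigma)Y_if(o)$ with $\sigma=\pi(\tau)$, integrated against the image under $\pi$ of the driving Lévy measure. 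This is (\ref{hu}) at $o$ with $d\to n$, $x_i\to y_i$, $G\setminus\{e\}\to M\setminus\{o\}$, and $G$--invariance extends it to every $\sigma\in M$.

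Finally, uniqueness and the converse. Uniqueness of $(b,a,\nu)$ I would obtain by reading the three parts off the generator: applying $A$ to functions with prescribed $1$--jet, $2$--jet, and support away from $o$ recovers $b$, $a$, and $\nu$ respectively, with the $\Ad(K)$--invariance serving as the well--definedness constraint that renders the representation canonical. For existence from a given $\Ad(K)$--invariant triple, I would lift it to conjugation--invariant characteristics on $G$ — extend $b,a$ by zero on $\fk$, which preserves $\Ad(K)$--invariance since $\p$ is $\Ad(K)$--invariant, and lift $\nu$ to a $K$--conjugation--invariant \LP\ measure on $G$ — invoke Hunt's existence theorem on $G$ to build $g_t$, and set $X_t=\pi(g_t)$. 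I expect the main obstacle to be establishing the lift correspondence rigorously: proving that every \LP\ in $M$ genuinely arises as the projection of a conjugation--$K$--invariant \LP\ in $G$, and that this projection is Markov, which is where all the $K$--invariance bookkeeping must be made precise.
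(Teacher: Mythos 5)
First, note that the paper itself offers no proof of Theorem \ref{huth}: the result is quoted from Hunt \cite{Hunt}, in the form given in \cite{LiaoN}, so your sketch has to be measured against the cited literature. Your overall architecture (project to $M$ a L\'{e}vy process in $G$ whose law is invariant under conjugation by $K$; conversely, extend an $\Ad(K)$-invariant triple by zero on $\fk$ and invoke Hunt's existence theorem on $G$) is the standard mechanism, and your converse/existence half is essentially sound. The structural problem is in the forward half: you outsource precisely the hard step --- that \emph{every} L\'{e}vy process in $M$ arises as $\pi(g_t)$ for some conjugation-$K$-invariant L\'{e}vy process $g_t$ in $G$ --- to \cite{LiaoN}. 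But in Liao's development this lifting theorem is obtained \emph{from} Hunt's formula on $M$, not prior to it: the formula on $M$ is proved directly, by repeating Hunt's compactness/Riesz-representation argument on the homogeneous space using the $\Ad(K)$-equivariant coordinates $y_i$, and the lift is then manufactured from the resulting triple exactly as in your own converse step (lift the triple, build the group process, and identify it with the given process by uniqueness). So, as organised, your proof is circular unless you supply an independent construction of the lift, and that is genuinely delicate: the naive candidate $\mu_t:=\rho_K * S(\mu_t^M)$ (with $\rho_K$ normalised Haar measure on $K$ and $\mu_t^M$ the $K$-invariant law of $X_t$ started at $o$) has the right projections to $M$ but fails in general to be a convolution semigroup on $G$.

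Second, there is a concrete computational error in your transfer of (\ref{hu}): it is not true that ``symmetry of $a$ kills the $\fk$-valued bracket contributions.'' Write $h=f\circ\pi$. Since $h$ is right $K$-invariant and the $Y_i$ are left-invariant, $Y_jh\equiv 0$ on all of $G$ for $Y_j\in\fk$; hence every term $Y_iY_jh$ with the \emph{second} index in $\fk$ vanishes, but for $Y_i\in\fk$ and $Y_j\in\p$ one gets $Y_iY_jh(e)=[Y_i,Y_j]h(e)$ with $[Y_i,Y_j]\in\p$ (as $[\fk,\p]\subseteq\p$ by $\Ad(K)$-invariance of $\p$), which is generically nonzero; symmetry of $a$ cannot cancel it, because the transposed entries $a_{ji}$ multiply terms that are identically zero. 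The correct bookkeeping is that the $\fk$--$\p$ cross block of $a$ contributes an extra first-order term that must be absorbed into the drift $b$ on $M$ --- as must the discrepancy between $x_i$ and $y_i\circ\pi$ away from the canonical neighbourhood, and the mass of $\nu$ on $\pi^{-1}(o)=K$ (such jumps are invisible in $M$, so one projects $\nu$ restricted off $K$). In the existence direction you may normalise the lift so that these cross blocks vanish, but in the forward direction you cannot assume the given lift is so normalised. Two smaller points: the canonicity of $(b,a,\nu)$ requires proving the $\Ad(K)$-invariance of the triple from equivariance of $T_t$ under the $K$-action together with the property $\sum_{i=1}^ny_i(\sigma)\Ad(k)Y_i=\sum_{i=1}^ny_i(k\sigma)Y_i$, which you only gesture at; and in the Feller argument, section maps are only Borel, so continuity of $T_tf$ needs locally smooth sections, while vanishing at infinity uses properness of $\pi$ (valid since $K$ is compact).
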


Because the action of $K$ on $M$ fixes the point $o$, it induces a linear action of $K$ on the tangent space $T_oM$. The homogeneous space $M=G/K$ is called irreducible if $T_oM$ has no proper subspace that
is invariant under this $K$-action. For example, the sphere $S^{n-1}=O(n)/O(n-1)$ is irreducible as a homogeneous space of the orthogonal group $O(n)$ on $\R^n$.

On an irreducible homogeneous space $M=G/K$, it can be shown (see \cite[Section~3.2]{LiaoN}) that under a suitable choice of the basis $\{Y_1,\ldots,Y_n\}$ of $\p$, any $\Ad(K)$-invariant matrix is proportional to the
identity matrix $I$, and when $\dim(M)>1$, there is no nonzero $\Ad(K)$-invariant vector. In this case, Hunt's generator formula (\ref{hu}) on $M$ takes the following simpler form: For $f\in C_c^\infty(M)$,
\begin{equation} \label{hu2}
Af(\sigma) = \frac{\alpha}{2}\sum_{i=1}^nY_iY_if(\sigma) + \int_G[f(\sigma\tau) - f(\sigma)]\nu(d\tau),
\end{equation}
where the integral is understood as the principal value, that is, as the limit of $\int_{U^c}[f(\sigma\tau)-f(\sigma)]\nu(d\tau)$ when the $K$-invariant neighborhood $U$ of $o$ shrinks to $o$. Therefore,
on an irreducible homogeneous space $M=G/K$ with $\dim(M)>1$, a  L\'{e}vy process is represented, in distribution, by a pair $(\alpha,\nu)$ comprising a real number $\alpha\geq 0$ and a L\'{e}vy measure $\nu$ on $M$.

We note that the suitable choice of the basis $\{Y_1,\ldots,Y_n)$ of $\p$ means that it is chosen to be orthonormal under an $\Ad(K)$-invariant inner product on $\p$.

Recall that the L\'{e}vy-Khintchine formula gives the characteristic function of a L\'{e}vy process in $\R^d$. Such a formula may be extended to a general Lie group
or a homogeneous space. See \cite[Section~5.5]{Applebaumbk2} and \cite{Heyer} for its extension to a compact Lie group in terms of group representations, and \cite{Applebaum2} for the extension to a general Lie group. In these generalisations, the role of the characteristic exponent is played by a matrix--valued function on the unitary dual (in the compact case), or a function whose values are linear operators in a Hilbert space (in the general case).
On a symmetric space, which is a special type of homogeneous space, Gangolli \cite{Gangolli} obtained a L\'{e}vy-Khintchine type formula, in terms of the spherical transform, that closely resembles the classical form in that the characteristic exponent is scalar--valued. A simpler approach to Gangolli's result based on Hunt's generator formula may be found in \cite{Applebaum} and \cite{LiaoWang}. See also \cite[Chapter~5]{LiaoN} for a more systematic discussion.

We have considered invariant Markov processes under a transitive action of a Lie group, and have identified such processes with L\'{e}vy processes, which are characterized by independent and stationary increments.
We will next consider the non-transitive action of a Lie group $G$. In this case, the state space $M$ is a collection of $G$-orbits, and an invariant Markov process may be decomposed into a ``radial'' part, that
is transversal to $G$-orbits, and an ``angular'' part, that lives in a $G$-orbit and is a $G$-invariant inhomogeneous Markov process. Such a process may be characterized by independent, but not necessarily stationary, increments, and so will be called an inhomogeneous L\'{e}vy process.
Before we discuss the decomposition of an invariant Markov process under a non-transitive action, we will first briefly discuss inhomogeneous L\'{e}vy processes in Lie groups
and homogeneous spaces.

\section{Inhomogeneous L\'{e}vy Processes}

Let $G$ be a Lie group and $K$ be a compact subgroup as before. A \cadlag\ inhomogeneous Markov process in $G$ or in $M=G/K$, that is invariant under the left translation in $G$ or $G$-invariant in $M$, will be called
an inhomogeneous L\'{e}vy process. This definition is justified by the following easy proposition (see \cite[Theorem~1.24]{LiaoN}).

\begin{prop} \label{prindinc}
Let $X_t$ be \cadlag\ process in $G$ or in $M=G/K$ with natural filtration $\{{\mathcal F}_t\}$. Then $X_t$ is an inhomogeneous L\'{e}vy process if and only if it has independent increments in the sense that for
any $s<t$, $X_s^{-1}X_t$ is independent of ${\mathcal F}_s$, where $X_s^{-1}X_t$ is understood as $S(X_s)^{-1}X_t$ on $M$ for any section map $S$ and its distribution is assumed not to depend on $S$.
\end{prop}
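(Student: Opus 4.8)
The plan is to establish the equivalence in Proposition \ref{prindinc} by showing each direction separately, treating the cases $G$ and $M=G/K$ in parallel since the section-map construction reduces the homogeneous-space case to a statement about $G$-valued increments. First I would clarify the definitional setup: an inhomogeneous L\'evy process is, by the stated definition, a \cadlag\ inhomogeneous Markov process that is invariant under left translation (in $G$) or $G$-invariant (in $M$), where invariance for the inhomogeneous case means $T_{s,t}(f \circ g)(x) = (T_{s,t}f)(gx)$. So the task is to show that this invariance property, conjoined with the Markov property, is equivalent to the independence-of-increments statement.

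For the forward direction, suppose $X_t$ is an inhomogeneous L\'evy process. The key step is to compute the conditional law of the increment $X_{s,t} = S(X_s)^{-1}X_t$ given $\mathcal{F}_s$. Using the Markov property, the conditional distribution of $X_t$ given $\mathcal{F}_s$ depends only on $X_s$ and is governed by $T_{s,t}$; that is, $\E(f(X_t) \mid \mathcal{F}_s) = (T_{s,t}f)(X_s)$ for bounded Borel $f$. Now apply this to a function of the increment: for bounded Borel $h$ on $G$ (resp. $M$), I want to show $\E(h(S(X_s)^{-1}X_t) \mid \mathcal{F}_s)$ is almost surely a nonrandom constant. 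Writing $g = S(X_s)$ (an $\mathcal{F}_s$-measurable random group element) and using left-invariance of $T_{s,t}$ with the substitution $f = h \circ l_{g^{-1}}$, the invariance relation converts the $X_s$-dependence into a fixed evaluation at the base point, so that the conditional expectation collapses to $(T_{s,t}(h\circ l_{g^{-1}}))(g\cdot o)$, which by invariance equals $(T_{s,t}h)(o)$ independently of $g$. This simultaneously yields independence of $\mathcal{F}_s$ and the fact that the increment law depends only on the pair $(s,t)$ through the operator $T_{s,t}$, and hence is independent of the choice of section map $S$ (the $K$-invariance of the relevant distribution is what makes the $S$-ambiguity vanish, exactly as in the remarks preceding Proposition \ref{princre}).

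For the converse, assume independent increments with the stated $S$-independence. I would recover the Markov property and the invariance by reconstructing $T_{s,t}$ from the increment laws. Given the increment $X_{s,t}$ independent of $\mathcal{F}_s$, write $X_t = S(X_s) X_{s,t}$ and compute $\E(f(X_t)\mid \mathcal{F}_s) = \E(f(S(X_s) X_{s,t}) \mid \mathcal{F}_s)$; since $X_{s,t}$ is independent of $\mathcal{F}_s$ and $S(X_s)$ is $\mathcal{F}_s$-measurable, this equals $\int f(S(X_s) z)\,\rho_{s,t}(dz)$ where $\rho_{s,t}$ is the increment law. This is a function of $X_s$ alone, giving the Markov property, and defines $T_{s,t}f(x) = \int f(S(x)z)\,\rho_{s,t}(dz)$; the $S$-independence assumption guarantees this is well-defined, and left-invariance (resp. $G$-invariance) follows from the cocycle-type identity relating $S(gx)$ to $g\,S(x)$ up to a $K$-factor that is absorbed by the $K$-invariance of $\rho_{s,t}$.

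The main obstacle is the careful handling of the section map $S$ and the attendant $K$-invariance on $M = G/K$. Because $S$ is only Borel and generally discontinuous, and because $S(gx) \neq g\,S(x)$ in general (they differ by an element of $K$), the increment $S(X_s)^{-1}X_t$ is only well-defined in distribution once one knows the relevant laws are $K$-invariant; this is precisely the subtlety flagged in the text before Proposition \ref{princre}. I would dispatch this by verifying that the $G$-invariance of the Markov process forces the conditional increment law to be $K$-invariant, so that replacing $S$ by another section $S'$ (differing by a $K$-valued factor) leaves all distributional statements unchanged. On $G$ itself this difficulty is absent ($K$ is trivial and $S = \mathrm{id}$), so the group case is genuinely routine; the content is entirely in managing the coset ambiguity, after which the proof is a direct consequence of the Markov property and the definition of invariance, which is why the paper rightly calls it an easy proposition.
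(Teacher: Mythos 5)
Your proposal is correct, and since the paper offers no proof of this proposition itself (it simply calls it easy and defers to \cite[Theorem~1.24]{LiaoN}), there is nothing to diverge from: your argument --- computing $\E\bigl(h(S(X_s)^{-1}X_t)\mid{\mathcal F}_s\bigr) = (T_{s,t}h)(o)$ via the Markov property and $G$-invariance in one direction, and reconstructing $T_{s,t}f(x)=\int f(S(x)z)\,\rho_{s,t}(dz)$ from the independent increments in the other --- is exactly the standard route taken in the cited reference. Your handling of the coset ambiguity is also sound, including the key observation that the assumed $S$-independence of the increment law (applied to sections $S'(x)=S(x)k$, $k\in K$) yields the $K$-invariance of $\rho_{s,t}$ needed to make $T_{s,t}$ well defined and $G$-invariant.
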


Recall that a L\'{e}vy process in $G$ is a Feller process and the domain of its generator $A$ contains $C_c^\infty(G)$. By a standard result in Markov process theory (see Proposition~1.7
in \cite[Chapter~4]{EK}), for any $f\in C_c^\infty(G)$,
\[f(X_t) - \int_0^t Af(X_s)ds\]
is a martingale under the natural filtration of $X_t$, where the generator $A$, given by (\ref{hu}), is expressed in terms of a triple $(b,a,\nu)$. This martingale property in fact provides a complete representation
of the L\'{e}vy process $X_t$, in distribution, in the sense that given any triple $(b,a,\nu)$, there is a L\'{e}vy process, unique in distribution, having this martingale property.

A \cadlag\ process $X_t$ is called stochastically continuous if $P(X_{t-}=X_t)=1$ for any $t>0$. A L\'{e}vy process is stochastically continuous, but an inhomogeneous L\'{e}vy process may not be. Feinsilver \cite{Feinsilver}
obtained a martingale representation for a stochastically continuous inhomogeneous L\'{e}vy process in a Lie group $G$ by a time-dependent triple. Such a representation holds also for inhomogeneous L\'{e}vy processes
in a homogeneous space $G/K$, even for non-stochastically continuous ones, see \cite{LiaoN}. The general representation is rather complicated, so we will not discuss it here, but will present this representation on
an irreducible $G/K$ where it takes a very simple form.

A family of L\'{e}vy measures $\nu(t,\cdot)$, indexed by time $t\geq 0$, will be called a L\'{e}vy measure function if it is nondecreasing and continuous under weak convergence. It will be called an extended
L\'{e}vy measure function if it may not be continuous, but is \cadlag, and $\nu(t,M)-\nu(t-,M)\leq 1$ for any $t>0$. The extended L\'{e}vy measure function as defined in \cite{LiaoN} requires an additional assumption,
but on an irreducible $M=G/K$, it reduces to the above definition. The following result is \cite[Theorem~8.16]{LiaoN}, which may be regarded as an extension of Hunt's generator formula (\ref{hu2}) for L\'{e}vy processes
on an irreducible $G/K$ to inhomogeneous L\'{e}vy processes.
We will assume the basis $\{Y_1,\ldots,Y_n)$ of $\p$ is chosen as described above in relation to (\ref{hu2}).

\begin{theorem} \label{thmartrep}
Let $M=G/K$ be irreducible with $\dim(M)>1$. For any inhomogeneous L\'{e}vy process $X_t$ in $M$, there is a unique pair $(a,\nu)$ comprising a continuous nondecreasing function $a(t)$ with $a(0)=0$ and
an extended L\'{e}vy measure function $\nu(t,\cdot)$ such that for any $f\in C_c^\infty(M)$,
\begin{equation} \label{mart}
f(X_t) - \int_0^t\frac{1}{2}\sum_{i=1}^nY_iY_if(X_s)da(s) - \int_0^t\int_M[f(X_s\tau) - f(X_s)]\nu(ds\,d\tau)
\end{equation}
is a martingale under the natural filtration of $X_t$, where the integral $\int_M[\cdots]$ is understood as the principal value as in (\ref{hu2}). Moreover, $X_t$ is stochastically continuous if and only if $\nu$
is a L\'{e}vy measure function.

  Conversely, given any pair $(a,\nu)$ as above, there is an inhomogeneous L\'{e}vy process $X_t$ in $M$ with $X_0=o$, unique in distribution, that has the above martingale property. Moreover, if $\nu$
is a L\'{e}vy measure function, then the uniqueness holds among all \cadlag\ processes in $M$.
\end{theorem}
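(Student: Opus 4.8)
The plan is to derive the representation from the increment structure isolated in Proposition~\ref{prindinc} and to reduce the time-local behaviour to the stationary Hunt formula (\ref{hu2}), the irreducibility hypothesis being what forces the simplified form. First I would use that $X_t$ has independent increments: writing $X_{s,t}=S(X_s)^{-1}X_t$, the laws $\mu_{s,t}$ of the increments are $K$-invariant probability measures on $M$ that form a convolution hemigroup, $\mu_{s,u}=\mu_{s,t}\cdot\mu_{t,u}$ for $s\le t\le u$, and they determine the two-parameter transition family $T_{s,t}f(\sigma)=\int_M f(\sigma\tau)\,\mu_{s,t}(d\tau)$. On any interval free of fixed discontinuities the family $t\mapsto\mu_{s,t}$ is weakly continuous, and $T_{s,t}$ solves a time-inhomogeneous evolution equation $\partial_t T_{s,t}f=T_{s,t}A_tf$, where each infinitesimal operator $A_t$ is $G$-invariant and satisfies the positive maximum principle. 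By Theorem~\ref{huth} and its irreducible specialisation (\ref{hu2}), every such $A_t$ must take the form $A_tf(\sigma)=\tfrac{\alpha(t)}{2}\sum_{i=1}^nY_iY_if(\sigma)+\int_M[f(\sigma\tau)-f(\sigma)]\lambda_t(d\tau)$ (principal value) for a scalar $\alpha(t)\ge 0$ and a $K$-invariant Lévy measure $\lambda_t$; here the absence of a drift term is exactly the payoff of irreducibility with $\dim(M)>1$, and it is also what guarantees that the principal value exists, since the first-order contribution $\sum_i y_i(\tau)$ integrates to zero by $\Ad(K)$-invariance. Setting $a(t)=\int_0^t\alpha(s)\,ds$ and $\nu(t,\cdot)=\int_0^t\lambda_s(\cdot)\,ds$ yields a continuous nondecreasing clock and a Lévy measure function, and Dynkin's formula for the inhomogeneous generator gives precisely the martingale (\ref{mart}).

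The delicate point is the treatment of fixed times of discontinuity, which is where the extended Lévy measure function enters. At a deterministic time $t_k$ with $P(X_{t_k-}\neq X_{t_k})>0$ the increment $\mu_{t_k-,t_k}$ is a nondegenerate $K$-invariant probability measure $\beta_k$ rather than a point mass; this forces an atom $\nu(t_k,\cdot)-\nu(t_k-,\cdot)$ in the time variable, whose total mass records the jump probability and is therefore bounded by $1$, giving the constraint $\nu(t,M)-\nu(t-,M)\le 1$. I would decompose $\nu$ into its time-continuous part, built from the $\lambda_t$ above, and the countable family of atoms coming from the fixed discontinuities, verifying that the càdlàg increment hemigroup is recovered by interleaving the continuous evolution with independent jumps governed by the $\beta_k$. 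Stochastic continuity of $X_t$ is then equivalent to the absence of such atoms, i.e.\ to $\nu$ being an honest Lévy measure function. Uniqueness of the pair $(a,\nu)$ follows by testing the martingale property against suitable $f\in C_c^\infty(M)$: functions that are quadratic in the coordinates $y_i$ near $o$ isolate $da$ through the trace of the second-order part, while functions supported away from $o$ recover $\nu(ds\,d\tau)$, the identification being forced by the uniqueness clause already present in Theorem~\ref{huth}.

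For the converse I would run the construction in reverse. Given $(a,\nu)$, I would define $\mu_{s,t}$ as the unique $K$-invariant probability measures whose Hunt characteristics are the continuous increments $a(t)-a(s)$ and $\nu(t,\cdot)-\nu(s,\cdot)$, composed multiplicatively with the fixed-jump probability measures $\beta_k$ at the atoms of $\nu$ in $(s,t]$; the hemigroup (Chapman--Kolmogorov) identity is then automatic from additivity of the characteristics. Kolmogorov's extension theorem produces a process with these finite-dimensional distributions, and a càdlàg modification exists by the usual stochastic-continuity plus monotonicity estimates away from the countable atom set. The martingale property (\ref{mart}) is checked by Dynkin's formula as before. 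Uniqueness in distribution holds because (\ref{mart}), applied to all $f\in C_c^\infty(M)$, pins down the one-dimensional marginals and hence, via independence of increments, the full law; when $\nu$ is a genuine Lévy measure function the time-dependent coefficients are continuous, the associated martingale problem is well posed, and uniqueness upgrades to hold among all càdlàg processes, whereas in the presence of fixed jumps a general càdlàg solution could redistribute the deterministic jumps and uniqueness can only be asserted within the independent-increment class.

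The main obstacle I expect is precisely the bookkeeping at the fixed discontinuities: reconciling the principal-value integral against $\nu(ds\,d\tau)$ with the atomic part, showing that the jump of $\nu(\cdot,M)$ is bounded by $1$ and corresponds to a bona fide probability measure $\beta_k$, and verifying that (\ref{mart}) remains a martingale across these deterministic times. Everything else reduces cleanly to the stationary theory, but the extended Lévy measure function is genuinely a new feature of the inhomogeneous setting and carries the technical weight of the argument.
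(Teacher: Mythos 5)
Your proposal cannot be compared with a proof inside this paper: Theorem \ref{thmartrep} is quoted verbatim from \cite[Theorem~8.16]{LiaoN} (extending Feinsilver \cite{Feinsilver}), and the survey deliberately gives no argument, so I measure your sketch against that source's strategy. There is one decisive gap in your direct part: you assume the increment hemigroup $t\mapsto\mu_{s,t}$ is differentiable in $t$, so that a pointwise generator $A_t$ exists, and you then produce the pair via $a(t)=\int_0^t\alpha(s)\,ds$ and $\nu(t,\cdot)=\int_0^t\lambda_s(\cdot)\,ds$. This yields only \emph{absolutely continuous} characteristics, whereas the theorem asserts (and must assert) merely that $a$ is continuous nondecreasing and $\nu(t,\cdot)$ is nondecreasing and weakly \cadlag. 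Concretely, let $W$ be the Riemannian Brownian motion of Theorem \ref{thskewprod} and $c$ the Cantor function; then $X_t=W_{c(t)}$ is a stochastically continuous inhomogeneous L\'{e}vy process in $M$ whose pair is $(c,0)$, yet no density $\alpha(t)$ exists and no evolution equation $\partial_tT_{s,t}f=T_{s,t}A_tf$ holds --- and Theorem \ref{thskewprod}, where $Z_t=W_{a(t)}$ with the \emph{same} $a$ as in Theorem \ref{thmartrep}, shows such singular clocks are squarely within the theorem's scope. The Feinsilver--Liao route avoids differentiation entirely: one first proves that $t\mapsto \E[f(X_{s,t})]$ has bounded variation, then constructs $\nu((s,t]\times B)$, for $B$ bounded away from $o$, as a limit along partitions of sums $\sum_k\mu_{t_k,t_{k+1}}(B)$ (accompanying laws of a triangular array), and extracts $a(t)-a(s)$ similarly from second moments of the coordinates $y_i$; irreducibility then kills the drift exactly as you say, but it is this partition-limit (compensator) construction, not an evolution equation, that carries the general statement.

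The converse as you sketch it also fails structurally. You define $\mu_{s,t}$ as the $K$-invariant law whose Hunt characteristics are the aggregated increments $a(t)-a(s)$ and $\nu(t,\cdot)-\nu(s,\cdot)$, and claim Chapman--Kolmogorov is ``automatic from additivity''. That requires the heat semigroup of $\Delta=\sum_{i=1}^nY_iY_i$ and the convolution operators attached to the various jump parts to commute: if $a$ increases only on $(s,t]$ and $\nu$ only on $(t,u]$, your recipe assigns to $\mu_{s,u}$ the simultaneous superposition of diffusion and jumps, while the true law is the time-ordered composition $\mu_{s,t}\cdot\mu_{t,u}$; the two agree precisely when the algebra of $K$-invariant measures under convolution is commutative, which is a Gelfand-pair hypothesis, valid on symmetric spaces but not guaranteed on a general irreducible $G/K$. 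The construction must instead proceed by a time-ordered product integral, or by solving the time-dependent martingale problem, which is how \cite{LiaoN} proceeds; your appeal to ``the martingale problem is well posed'' in the final uniqueness step likewise asserts rather than proves the hard point. By contrast, your bookkeeping at fixed discontinuities --- the atoms $\beta_k$, the bound $\nu(t,M)-\nu(t-,M)\leq 1$, and the equivalence of stochastic continuity with absence of atoms --- is the correct picture and matches the known proof; it is the two shortcuts above that would make the argument fail as written.
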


\section{Decomposition of Invariant Markov Processes}

Let $M$ be a manifold under the non-transitive action of a Lie group $G$. Then $M$ is a collection of $G$-orbits. Because $G$ acts transitively
on each $G$-orbit, any $G$-orbit is a homogeneous space $G/K$ for some closed subgroup $K$.  By the Principal Orbit Type Theorem (\cite{tomDieck}), there is a subgroup $K$ such that the $G$-orbits
of type $G/K$ fill up an open dense subset of $M$. Thus, by removing a small subset if necessary, we may assume $M$ is a union of $G$-orbits of type $G/K$. Then it is reasonable to
assume there is a submanifold $M_1$ of $M$ that is transversal to $G$-orbits in the sense that it intersects each $G$-orbit at exactly one point, and
\begin{equation} \label{MM1M2}
M = M_1\times M_2, \quad M_2 = G/K.
\end{equation}
The above will be assumed with $K$ being compact. Note that $G$ acts on $M$ through its natural action on $M_2=G/K$, and it acts trivially on $M_1$.

For example, the orthogonal group $O(n)$ acts non-transitively on $\R^n$. Its orbits are spheres centered at the origin of type $O(n)/O(n-1)$, except the origin is an orbit
by itself. After removing the origin, $\R^n$ becomes $M_1\times M_2$, where $M_1$ is a half line from the origin and $M_2$ is the unit sphere. This is just the usual spherical
polar decomposition.

Let $X_t$ be a $G$-invariant Markov in $M$, and let $X_t=(Y_t,Z_t)$, where $Y_t\in M_1$ and $Z_t\in M_2$, be the decomposition (\ref{MM1M2}). Borrowing the terms from the polar decomposition of $\R^2$,
we will call $Y_t$ the radial part and $Z_t$ the angular part of the process $X_t$, noting that both may be multi-dimensional. It is easy to show that $Y_t$ is a Markov process in $M_1$,
and with some effort, it can be shown that $Z_t$ is an inhomogeneous L\'{e}vy process in $M_2=G/K$ under the conditional distribution given $Y_t$, but to state this more precisely requires some preparation. See Sections 1.5 and 1.6
of \cite{LiaoN} for more details.

Let $D(M)$ be the space of \cadlag\ paths in $M$ equipped with the $\sigma$-algebra ${\mathcal F}$ generated by the coordinate maps $\omega\mapsto\omega(t)$, $t\geq 0$, for $\omega\in D(M)$. The distribution
of the process $X_t$ with $X_0=x$ induces a probability measure $P_x$ on $D(M)$. We may regard $X_t$ as the coordinate process on $D(M)$ by setting $X_t(\omega)=\omega(t)$. Similarly, $Y_t$ and $Z_t$
are regarded as coordinate processes on $D(M_1)$ and $D(M_2)$ (respectively). The product $M=M_1\times M_2$ induces a product $D(M)=D(M_1)\times D(M_2)$. Let $J_1$: $D(M)\to D(M_1)$ and $J_2$: $D(M)\to D(M_2)$
be the natural projections, and let ${\mathcal F}^1$ and ${\mathcal F}^2$ be respectively the $\sigma$-algebras on $D(M_1)$ and $D(M_2)$ generated by the coordinate maps.
We may regard ${\mathcal F}^1$ as a $\sigma$-algebra on $D(M)$ by identifying it with $J_1^{-1}({\mathcal F}^1)=\{J_1^{-1}(B)$; $B\in{\mathcal F}^1\}$.

Let $x=(y,z)\in M=M_1\times M_2$. By the $G$-invariance of the Markov process $X_t$, $Q_y=P_x\circ J_1^{-1}$ is a probability measure on $D(M_1)$ depending only on $y$, and is in fact the distribution of the radial process $Y_t$.

Because $(D(M),{\mathcal F})$ is a standard Borel space, there is a regular conditional distribution $P_x^{y(\cdot)}(\cdot)$ of $P_x$ given ${\mathcal F}^1$, that is, for any $F\in{\mathcal F}$,
\[P_x(F\mid{\mathcal F}^1)(y(\cdot)) = P_x^{y(\cdot)}(F) \quad \mbox{for $Q_y$-almost all $y(\cdot)\in D(M_1)$},\]
where $P_x^{y(\cdot)}(\cdot)$ is a probability kernel from $D(M_1)$ to $D(M)$ in the sense that it is a probability measure on $D(M)$ for each fixed $y(\cdot)\in D(M_1)$ and $P_x^{y(\cdot)}(F)$ is ${\mathcal F}^1$-measurable
in $y(\cdot)$ for each $F\in{\mathcal F}$.

Let $R_z^{y(\cdot)}=P_x^{y(\cdot)}\circ J_2^{-1}$. Then $R_z^{y(\cdot)}$ is a probability kernel from $D(M_1)$ to $D(M_2)$, and for $F_1\in{\mathcal F}^1$ and $F_2\in{\mathcal F}^2$,
\[P_x(F_1\cap F_2) = \int_{F_1}R_z^{y(\cdot)}(F_2)Q_y(dy(\cdot)).\]
Therefore, $R_z^{y(\cdot)}$ is the conditional distribution of the angular process $Z_t$ given the radial process $Y_t$. The following result is Theorems 1.31 and 1.39 in \cite{LiaoN}.

\begin{theorem} \label{thdecomp}
For $y\in M_1$ and $z\in M_2$, the radial process $Y_t$ is a Markov process in $M_1$ under $Q_y$, and for $Q_y$-almost all $y(\cdot)\in D(M_1)$, the angular process $Z_t$ is an inhomogenous L\'{e}vy process
in $M_2=G/K$ under $R_z^{y(\cdot)}$.
\end{theorem}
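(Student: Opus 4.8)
The plan is to treat the two assertions in turn, handling the radial Markov property first since it prepares the harder angular statement. Since $G$ acts trivially on $M_1$, for any bounded Borel $h$ on $M_1$ the lifted function $f=h\circ\mathrm{pr}_1$ on $M=M_1\times M_2$ satisfies $f\circ g=f$ for every $g\in G$; the invariance relation (\ref{inv}) then gives $T_tf(y,z)=T_tf(y,gz)$ for all $g$, and since $G$ acts transitively on $M_2$ this value is independent of $z$ and defines an autonomous contraction semigroup $\widetilde{T}_t$ on $M_1$. Conditioning the Markov property of $X_t$ onto the radial sub-filtration $\mathcal{F}^Y_s:=\sigma(Y_u:u\le s)\subseteq\mathcal{F}_s$ yields $\E_{P_x}(h(Y_t)\mid\mathcal{F}^Y_s)=\widetilde{T}_{t-s}h(Y_s)$, which is precisely the Markov property of $Y_t$ under $Q_y$.

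The crux is the angular part, and the first step is a single-increment identity. Writing the invariance (\ref{inv}) in terms of transition probabilities gives $p_t(gx,\cdot)=g_*p_t(x,\cdot)$; taking $g=S(z)$ (so that $g\cdot o=z$) yields, for any section map $S$,
\[
p_{t}\big((y,z),\,\cdot\,\big)=\big(S(z)\big)_{*}\,p_{t}\big((y,o),\,\cdot\,\big),
\]
where $(S(z))_{*}$ pushes forward the $M_2$-coordinate by the $G$-action. Pushing this forward under the increment map $(y',z')\mapsto(y',S(z)^{-1}z')$ shows that the joint law of $\big(Y_t,\,S(Z_s)^{-1}Z_t\big)$ given $\mathcal{F}_s$ equals $p_{t-s}((Y_s,o),\cdot)$, hence depends on $Y_s$ but not on $Z_s$. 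Taking instead $g=k\in K$ gives $k_{*}p_{t}((y,o),\cdot)=p_{t}((y,o),\cdot)$, so this law is $K$-invariant and the increment distribution is independent of the choice of $S$, matching the convention of Proposition~\ref{prindinc}.

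Next I would promote this to a factorisation of the finite-dimensional distributions. Fix times $0=t_0<\cdots<t_n$ and change variables from the angular positions $z_i:=Z_{t_i}$ to the increments $w_i:=S(z_{i-1})^{-1}z_i$, a triangular and hence invertible substitution once $z_0$ is given. By the Markov property and the single-increment identity, each transition factor $p_{t_i-t_{i-1}}((y_{i-1},z_{i-1}),dy_i\,dz_i)$ turns into $p_{t_i-t_{i-1}}((y_{i-1},o),dy_i\,dw_i)$, so the joint law of $(y_0,z_0,(y_1,w_1),\dots,(y_n,w_n))$ is $\delta_{(y,z)}(dy_0\,dz_0)\prod_{i=1}^n p_{t_i-t_{i-1}}((y_{i-1},o),dy_i\,dw_i)$. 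Disintegrating each factor over its $y_i$-marginal splits off a radial transition kernel and a conditional increment law $\kappa_i(y_{i-1},y_i,dw_i)$; the product of the radial kernels re-confirms the Markov structure of the sampled radial path, while the residual product exhibits $w_1,\dots,w_n$ as conditionally independent given $(y_0,\dots,y_n)$, each $w_i$ governed by $\kappa_i$, which depends only on the radial endpoints and the elapsed time.

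The main obstacle is the measure-theoretic passage from this partition-wise conditional independence to a statement about the regular conditional distribution $R_z^{y(\cdot)}$ holding for $Q_y$-almost every radial path $y(\cdot)$. The strategy is that, since the factorisation is valid for every finite partition, the increments of $Z_t$ over disjoint intervals are conditionally independent given the radial path; in particular, for fixed $s<t$ the increment $S(Z_s)^{-1}Z_t$ is independent of $\mathcal{F}^Z_s:=\sigma(Z_u:u\le s)$ under $R_z^{y(\cdot)}$, with a law determined by the restriction of $y(\cdot)$ to $[s,t]$. The delicate points are the \cadlag\ regularisation, so that the discrete-time statements must be glued along a countable dense set of times and extended by right-continuity, and the selection of a single $Q_y$-null set outside which the conclusion holds simultaneously for all rational $s<t$. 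Once these are in place, $Z_t$ is a $G$-invariant inhomogeneous Markov process under $R_z^{y(\cdot)}$ with independent increments, and Proposition~\ref{prindinc} identifies it as an inhomogeneous \LP\ in $M_2=G/K$, completing the proof.
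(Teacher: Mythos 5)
Your proposal is correct and follows essentially the same route as the paper's source for this result: the paper itself supplies no proof of Theorem \ref{thdecomp}, deferring to Theorems 1.31 and 1.39 of \cite{LiaoN}, where the argument is exactly your chain --- the invariance (\ref{inv}) rewritten as $p_t(gx,\cdot)=g_{*}\,p_t(x,\cdot)$, transitivity of the $G$-action on $M_2$ yielding an autonomous radial semigroup, the triangular change of variables to increments with $K$-invariance making the increment laws independent of the section map, and the passage to the regular conditional distribution $R_z^{y(\cdot)}$ by gluing over countably many times and right-continuity. The measure-theoretic step you flag as the main obstacle (a single $Q_y$-null set working simultaneously for all rational $s<t$, and the fact that a coarse increment is recovered from refined ones only in law, via $K$-invariance of the conditional increment distributions) is precisely where the labour sits in \cite{LiaoN}, and your outline identifies the right tools for it.
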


We will now consider a continuous $G$-invariant Markov process $X_t$ in $M=M_1\times M_2$ and assume $M_2=G/K$ is irreducible. An example is a Brownian motion in $\R^n$, which is invariant under the orthogonal group $O(n)$, and in that case, the associated decomposition $X_t=(Y_t,Z_t)$ is just the usual spherical polar decomposition.

It is well known that the radial process $Y_t$ and the angular process $Z_t$ are not independent, but they are independent up to a random time change. More precisely, there is a spherical Brownian motion $W_t$ in the unit sphere $S^{n-1}$,
independent of $Y_t$, and a time change process $a(t)$ adapted to the natural filtration of $Y_t$, such that $Z_t=w_{a(t)}$. This is called the skew-product decomposition of the Brownian motion.
Here the time change process $a(t)$ is a real-valued nondecreasing process with $a(0)=0$.

This skew-product is generalized by Galmarino \cite{Galmarino} to any continuous $O(n)$-invariant Markov process in $\R^n$. Using the decomposition
of invariant Markov processes (Theorem~\ref{thdecomp}) and the representation of inhomogeneous L\'{e}vy processes in irreducible homogeneous spaces (Theorem~\ref{thmartrep}),
we may obtain the skew-product on a more general space by a conceptually more transparent proof.  See \cite[Section~9.2]{LiaoN} for more details. Note that the time change process $a(t)$
in the following theorem is the same $a(t)$ as in Theorem~\ref{thmartrep}.

\begin{theorem} \label{thskewprod}
Let $X_t$ be a continuous $G$-invariant Markov process in $M=M_1\times M_2$ and assume $M_2=G/K$ is irreducible with $\dim(M_2)>1$. Then there are a Riemannian Brownian motion $W_t$
in $M_2$ under a $G$-invariant Riemannian metric, independent of the radial process $Y_t$, and a time change process $a(t)$ that is adapted to the natural filtration of $Y_t$, such that $Z_t=W_{a(t)}$.
\end{theorem}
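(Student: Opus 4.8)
The plan is to read off the skew-product from the conditional structure provided by Theorem~\ref{thdecomp} together with the martingale representation of Theorem~\ref{thmartrep}, using path continuity to annihilate the jump part. Fix a radial path $y(\cdot)\in D(M_1)$. By Theorem~\ref{thdecomp}, under the conditional law $R_z^{y(\cdot)}$ the angular process $Z_t$ is an inhomogeneous L\'{e}vy process in the irreducible space $M_2=G/K$, so Theorem~\ref{thmartrep} furnishes a unique pair $(a,\nu)$ --- a priori depending on $y(\cdot)$ --- for which the expression (\ref{mart}) is a martingale. First I would observe that $a$ and $\nu$ are functionals of the radial path: since $R_z^{y(\cdot)}$ depends measurably on $y(\cdot)$ and the pair in Theorem~\ref{thmartrep} is unique, $(a(t),\nu(t,\cdot))$ is ${\mathcal F}^1$-measurable, and the local nature of its construction (it is built from the accumulated angular quadratic variation of $X$ up to time $t$) shows that $a(t)$ is in fact adapted to the natural filtration of $Y_t$.

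The decisive step is to use continuity. Since $X_t=(Y_t,Z_t)$ has continuous paths, so does $Z_t$; hence, conditionally on $y(\cdot)$, the inhomogeneous L\'{e}vy process $Z$ has no jumps. As jumps are governed by $\nu$, this forces $\nu(t,\cdot)\equiv 0$ for $Q_y$-almost every $y(\cdot)$. With the jump term gone, (\ref{mart}) collapses to the statement that
\[
f(Z_t) - \int_0^t \frac{1}{2}\sum_{i=1}^n Y_iY_i f(Z_s)\, da(s)
\]
is a martingale for every $f\in C_c^\infty(M_2)$. Because $M_2$ is irreducible with $\dim(M_2)>1$, the basis $\{Y_1,\ldots,Y_n\}$ is orthonormal for the $\Ad(K)$-invariant inner product on $\p\cong T_oM_2$, there is no nonzero $\Ad(K)$-invariant vector, and consequently $\frac{1}{2}\sum_{i=1}^n Y_iY_i$ coincides with $\frac{1}{2}\Delta_{M_2}$, half the Laplace--Beltrami operator of the associated $G$-invariant Riemannian metric (the first-order part that would otherwise appear vanishes by $\Ad(K)$-invariance). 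Thus $Z$ solves the martingale problem for the time-changed generator $\frac{1}{2}\Delta_{M_2}$ run on the clock $da$.

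Next I would undo the time change. Let $a^{-1}(u)=\inf\{t\geq 0 : a(t)>u\}$ be the right-continuous inverse of the continuous nondecreasing $a$, and set $W_u:=Z_{a^{-1}(u)}$ for $u<a(\infty)$. A standard time-change computation then shows that $f(W_u)-\int_0^u \frac{1}{2}\Delta_{M_2} f(W_v)\, dv$ is a martingale in the time-changed filtration, so $W$ solves the martingale problem for $\frac{1}{2}\Delta_{M_2}$ and is therefore a Riemannian Brownian motion on $M_2$; reversing the substitution gives $Z_t=W_{a(t)}$. The main obstacle lies precisely here: $a$ need not be strictly increasing, so on the (random) intervals where $a$ is flat the process $Z$ stands still while the Brownian clock does not advance, and if $a(\infty)<\infty$ then $W$ is only constructed on $[0,a(\infty))$. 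Both issues are the classical Dambis--Dubins--Schwarz phenomenon; they are handled by enlarging the probability space with an independent Riemannian Brownian motion that fills in the frozen intervals and continues $W$ beyond time $a(\infty)$, and one must check that this auxiliary noise can be chosen independently of $(Y,Z)$ while preserving $Z_t=W_{a(t)}$.

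Finally, I would establish independence of $W$ from the radial part. The crucial point is that, once $\nu\equiv 0$, the martingale problem solved by $W$ is that of $\frac{1}{2}\Delta_{M_2}$, whose coefficients carry no dependence on $y(\cdot)$; by uniqueness of the solution of this martingale problem started from $Z_0=z$, the conditional law of $W$ given $\{Y=y(\cdot)\}$ is the law of Riemannian Brownian motion from $z$, the same for $Q_y$-almost every $y(\cdot)$. A conditional law that does not depend on the conditioning variable is independent of it, so $W$ is independent of $Y$. Combined with the adaptedness of $a$ to the filtration of $Y$ noted at the outset, this yields exactly the asserted skew-product $Z_t=W_{a(t)}$.
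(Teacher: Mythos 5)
Your proposal follows essentially the same route as the paper, which proves the theorem exactly by combining the decomposition of Theorem~\ref{thdecomp} with the martingale representation of Theorem~\ref{thmartrep}, observing that path continuity forces the L\'{e}vy measure function $\nu$ to vanish so that conditionally on the radial path $Z_t$ is a Brownian motion time-changed by the same $a(t)$ as in Theorem~\ref{thmartrep} (the paper only sketches this and defers the details, including the Dambis--Dubins--Schwarz-type enlargement and the adaptedness of $a(t)$, to Section~9.2 of \cite{LiaoN}). Your treatment fills in that outline correctly, and you rightly flag the two delicate points --- flat intervals of $a$ and the possibility $a(\infty)<\infty$, plus the independence of the auxiliary filling-in noise from $(Y,Z)$ --- that the cited reference handles.
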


\end{document}